\def\diag{\textnormal{diag}}
\def\id{\textnormal{I}}
\newcommand{\interleave}{\large|\hspace{-0.1em}\large|\hspace{-0.1em}\large|}
\newcommand{\dy}[1]{\textcolor{brown}{#1}}
\newtheorem{theorem}{Theorem}
\newtheorem{prop}[theorem]{Proposition}%
\newtheorem{remark}{Remark}%
\newtheorem{problem}{Problem}
\newtheorem{lemma}{Lemma}
\newtheorem{definition}{Definition}%
\title{Random space-time sampling and reconstruction of sparse bandlimited graph diffusion field}
\begin{document}


\author{
  Longxiu Huang\thanks{Department of CMSE and Mathematics, Michigan State University. Email: huangl3@msu.edu} 
  \and
  Dongyang Li\thanks{Department of Mathematics, University of California Santa Barbara. Email: dongyangli@ucsb.edu} 
  \and
  Sui Tang\thanks{Department of Mathematics, University of California Santa Barbara. Email: suitang@ucsb.edu} 
  \and
  Qing Yao\thanks{Department of Linguistics, University of Texas at Austin. Email: qyao@utexas.edu}
}

\date{}

\maketitle

\abstract{In this work, we investigate the  sampling and reconstruction of spectrally $s$-sparse bandlimited graph signals governed by heat diffusion processes. We propose a random space-time sampling regime, referred to as {randomized} dynamical sampling, where a small subset of space-time nodes is randomly selected  at each time step based on a  probability distribution.  To analyze the recovery problem, we establish a rigorous mathematical framework  by introducing  the parameter \textit{the dynamic spectral graph weighted coherence}. This key parameter governs the number of space-time samples needed for stable recovery and extends the idea of variable density sampling to the context of dynamical systems. By optimizing the sampling probability distribution, we show that as few as $\mathcal{O}(s \log(k))$ space-time samples are sufficient for accurate reconstruction in optimal scenarios, where $k$ denotes the bandwidth of the signal. Our framework encompasses both static and dynamic cases, demonstrating a  reduction in the number of spatial samples needed at each time step by exploiting temporal correlations. Furthermore, we provide a computationally efficient and robust algorithm for signal reconstruction. Numerical experiments validate our theoretical results and illustrate the practical efficacy of our proposed methods. }

\noindent\textbf{Keywords}: Spectral sparse bandlimited graph signals; Random space-time sampling;  Reconstruction; Heat diffusion process; Variable density sampling; Compressed sensing. 
\section{Introduction}
Graph signals, which model complex relationships across various domains such as social networks and transportation systems, have garnered increasing attention in recent years, as evidenced by studies from Shuman et al. \cite{shuman2012graphs, shuman2013emerging,sandryhaila2014big,ortega2018graph,jian2024kernel}. Processing data from large-scale graphs presents significant computational and storage challenges,  highlighting the need for efficient data compression techniques that preserve essential information \cite{tanaka2020sampling}. As a result, extensive research has been dedicated to optimizing the sampling and reconstruction of graph signals to ensure accurate recovery from discrete samples \cite{pesenson2008sampling,chen2015discrete,pesenson2009variational,segarra2016reconstruction,puy2018random,shen2024reconstruction}. 

The problems of sampling and reconstruction are inherently ill-posed without additional constraints on graph signals. Significant advancements in the sampling theory for graph signals \cite{anis2016efficient, puy2018random, tanaka2020sampling} have introduced low-dimensional structural assumptions, such as smoothness \cite{kalofolias2016how,dong2016learning,giraldo2022reconstruction} and spectral sparsity \cite{tropp2009beyond}, which extend classical sampling theory to regular domains. These assumptions, supported by empirical evidence from real-world graph signals, facilitate effective graph signal compression through the proposed sampling algorithms  \cite{dynamic_sampling,chen2015discrete}. 

In contemporary applications, graph signals frequently exhibit time-dependence, as seen in phenomena such as rumor propagation on social networks \cite{xiao2019rumor} and neural activity transfer in the brain \cite{sporns2010networks}. However, much of the existing work does not account for this temporal dependence. This time-dependence adds complexity to the sampling of graph signals that evolve over time, particularly when resources are limited and sufficient samples cannot be taken at a single time instance. This makes traditional sampling theory for static graph signals insufficient. A pertinent example is electrocorticography (ECoG), where spatial constraints limit the number of usable devices at any given time.   Nevertheless, the temporal dependence among graph signals offers new possibilities: the deficiency of spatial information at a particular time can be compensated by aggregating data across multiple instances, suggesting a shift towards sparse space-time sampling strategies.

This paper addresses the problem of sampling and reconstructing dynamical graph signals, modeled using affine linear dynamical systems—a problem we refer to as the \textit{dynamical sampling problem}. This is a well-known and challenging inverse problem, as it involves reverse the heat diffusion process from partially observed dynamical data. We focus on random sparse space-time sampling and establish sampling theories with corresponding recovery guarantees. Additionally, we employ $\ell_1$ regularization in the reconstruction algorithm, exploiting the spectral sparsity of the initial state of the dynamical system. Our work extends the theory of spectrally sparse signals in the static case and generalizes compressive sensing algorithms to the dynamical setting.

\paragraph{Low-Dimensional Structure in Dynamic Graph Data}

We denote the signals at \(t\in \{0\}\cup [T-1]\) by \(\mathbf{x}_t \in \mathbb{R}^n\) for some \(T \in \mathbb{Z}_+\). Dynamical systems often model time-dependent signals. For simplicity, we initially focus on the heat diffusion process governed by the equation 
\begin{equation}\label{eqn:DS}
\frac{\partial }{\partial t} \mathbf{x}_t = - \mathbf{L} \mathbf{x}_t, \quad t \geq 0,
\end{equation}
where \(L\) is the graph Laplacian. The  solution to \eqref{eqn:DS} is \(\mathbf{x}_t = e^{-t\mathbf{L}} \mathbf{x}_0\). For discrete time steps \(\{t \cdot \Delta t\}_{t=0}^{T-1}\) with \(\Delta t > 0\), we define the signal evolution operator as \(\mathbf{A} = e^{-\Delta t \mathbf{L}}\), yielding \(\mathbf{x}_t = \mathbf{A}^t \mathbf{x}_0\).

The graph heat diffusion equation models natural phenomenon such as   temperature variations, air pollution dispersion, and photon transportation in tissues where the initial state is smooth and band-limited \cite{lu2009distributed}. The framework developed in this paper extends to more general dynamics beyond heat diffusion and can also deal with irregular time instances, as discussed in Section \ref{sec:generalize}.

In practice, the bandwidth of the signal is often unknown and assumed to be large. In such cases, the initial state has sparse support in the frequency domain, which is the focus of our paper. The concept of sparsity in the frequency domain, evident in the spectrally sparse vertex representation of many real-world graph signals, extends the Euclidean domain sparsity assumptions discussed by Tropp et al. \cite{tropp2009beyond}.

In graph signal analysis, prevalent methods often depend on assumptions such as bandlimitedness \cite{anis2016efficient}, Sobolev smoothness \cite{giraldo2022reconstruction}, or smooth temporal differences \cite{qiu2017time}. This paper models smooth graph signals as sparse within a substantial bandwidth in the graph Fourier domain, where these signals are also described by a linear dynamical system.

  \paragraph{Space-time Sampling and Reconstruction Problem.}
In dynamical sampling problems, the recoverability of the initial signal \(\mathbf{x}_0\) typically depends on the spectral properties of the evolution operator \(A\) and the structure of the space-time sampling. Denote the sampling set at time step \(t\in \{0\}\cup [T-1]\) by \(\Omega_t = \{\omega_{t, 1}, \ldots, \omega_{t, m_t}\} \subset [n]\), which is randomly selected  according to some probability distribution on \([n]\). The corresponding sampling operator \(\mathbf{S}_{\Omega_t} \in \mathbb{R}^{m_t \times n}\) is defined by
\[
\mathbf{S}_{\Omega_t}(i,j) = 
\begin{cases}
    1, & \text{ if } j = \omega_{t, i} \\
    0, & \text{ otherwise}
\end{cases}.
\]
Our space-time reconstruction problem is described as follows:

\begin{problem}\label{prob:space-time}
    Let \(\mathbf{x}\) be \(s\)-sparse and \(k\)-bandlimited with a space-time trajectory given by \(\mathbf{x}_0 = \mathbf{x}, \mathbf{x}_1, \ldots, \mathbf{x}_{T-1}\). At each time step $t \in \{0\}\cup [T-1]$, we have  a space-time sampling set \(\Omega_t \subset [n]\). The goal is  to recover \(\mathbf{x}\) from its space-time samples \(\{\mathbf{y}_t = \mathbf{S}_{\Omega_t} \mathbf{x}_t : t\in \{0\}\cup [T-1]\}\), where \(\mathbf{S}_{\Omega_t}\) is the sampling operator that selects the entries indexed by \(\Omega_t\).
\end{problem}

In the static setting where $T=0$, sampling graph signals is equivalent to selecting nodes on the graph. However, in the dynamic case, we may have more sampling modalities adapted to practical constraints. For example, we may only have a very limited number of sensors to use per time step but are allowed the flexibility to update our sampling locations at each time iteration. This motivates our focus on the following sampling regime:

\noindent\textit{Random selections at each iteration}: At each time step \(t\in \{0\}\cup [T-1]\), we randomly sample \(m_t\) nodes according to some  probability distribution. Mathematically, let \(\mathbf{p}_t \in \mathbb{R}^n\) represent the probability distribution over the nodes \([n]\) for each time step \(t\in \{0\}\cup [T-1] \). The sampling sets \(\Omega_t\) are selected independently with replacement at each time step, according to \(\mathbf{p}_t\). This regime models scenarios where sensors can be updated or changed over time. In this work, our main goal is to address the following two questions:

\begin{enumerate}[label=(\roman*),leftmargin=.25in]
    \item \label{main-i} What conditions should  $\mathbf{p}_t$ and $m_t$  satisfy to ensure that  Problem~\ref{prob:space-time} can be  solved? How should the sampling probabilities be chosen to be compatible with the graph structure and dynamics to optimize data recovery?
    \item \label{main-ii} What algorithms are suitable for the reconstruction of \(\mathbf{x}\) from its space-time samples?
\end{enumerate}

    \paragraph{Main results.} 

To address Question \ref{main-i}, we introduce a novel graph coherence parameter, $\nu_{k,T}$, which is defined by incorporating dynamics, sparsity, and spatial sampling patterns. Note that this parameter  extends   the static graph coherence defined in \cite{puy2018random}  for bandlimited graph signal.  
Through appropriate reweighing,  we show that the sampling complexity required to ensure the restricted isometry property (RIP) of the sampling matrices for the  target graph signals is proportional to $\left(\nu_{k,T}\right)^2 s \log \frac{k}{s}$. The  linear dependence on $s$ is validated by the numerical experiments (see Figure~\ref{fig:rec_r} and Figure~\ref{fig:rec_m}). Notably,  $\nu_{k,T}^2$ can be as small as $o(1)$  (see Figure~\ref{tab:coh_r}) for regular graphs such as the ring graph. Our approach leverages an embedding technique that reformulates the dynamic problem as a static sparse recovery problem within an extended space-time domain. Technically, we handle a structured random sensing matrix governed by the underlying physics, which differs from traditional sensing matrices with i.i.d. entries commonly used in compressed sensing. Identifying the appropriate incoherence that reflects the intrinsic sampling complexity for this problem presents a non-trivial challenge.

Furthermore, we derive optimal sampling distributions by minimizing the coherence parameters, resulting in an adaptive space-time sampling technique that leverages temporal dynamics to optimize sampling complexity in the proposed data recovery problems.

To address Question~\ref{main-ii}, we leverage the RIP of our re-weighted sampling operators to enable robust signal reconstruction via $\ell_1$-minimization. Specifically, we implement the CoSaMP algorithm and provide the corresponding  recovery theorems. Beyond the heat diffusion process, we extend our methods to broader classes of linear dynamics in \Cref{sec:generalize}. In \Cref{sec:experiments}, we validate our theoretical results on sampling complexity and demonstrate the robustness of our algorithms through various numerical examples.

\section{Related Works}
Our work lies at the intersection of compressed sensing and dynamic graph signal processing. 

\paragraph{Dynamical Sampling} Our work builds on the dynamical sampling framework introduced by Aldroubi and his collaborators \cite{aldroubi2013dynamical,aldroubi2017dynamical}, which aims to provide a fundamental mathematical understanding of space-time sampling for dynamical systems. While most   studies focus on deterministic settings and the feasibility of sampling sets, a less-explored topic, is how to determine optimal sampling locations and times based on physical priors to enhance data recovery. We extend recent advances in coherence-guided randomized dynamical sampling, as explored in \cite{dynamic_sampling}, to address this challenge.

The work in \cite{dynamic_sampling} investigates dynamical sampling of bandlimited graph signals, primarily relying on \(\ell_2\)-regularization for signal recovery. In contrast, this work emphasizes the compressibility of graph signals and uses \(\ell_1\)-minimization for recovery. This paper extends our previous work in \cite{conference_article}, where we addressed a similar problem but selected space-time samples using a probability distribution over \([n] \times \{0,1,\cdots,T-1\}\). In this paper, we explore a different regime, where randomized spatial samples are taken at each time step \(t \in \{0\}\cup [T-1]\). 

\paragraph{Dynamic Graph Signal Processing}
The reconstruction of time-varying graph signals from space-time samples has gained significant research interest in recent years, as documented in works such as \cite{wang2015distributed,qiu2017time,jiang2020,giraldo2022reconstruction,isufi2020observing,lu2021probabilistic,romero2017kernel, jian2023kernel,guo2024time}. These studies typically rely on assumptions about graph dynamics, like bandlimited graph processes \cite{wang2015distributed,chen2015signalieee} or smooth temporal differences \cite{qiu2017time,giraldo2022reconstruction}, or the signal class, such as bandlimitedness \cite{dynamic_sampling, puy2018random}, to facilitate recovery through various minimization problems \cite{wang2015distributed,qiu2017time,giraldo2022reconstruction}. While these methods have shown empirical success, a comprehensive mathematical framework for optimal data acquisition in dynamical systems for exact and robust recovery remains underdeveloped.

In this work, we focus on affine-linear dynamics and leverage signal sparsity, enabling recovery through $\ell_1$-minimization while providing restricted isometry property (RIP) guarantees on the sampling operators. A recent kernel-based reconstruction approach for generalized graph signal processing \cite{jian2023kernel} closely aligns with our setting, as it prioritizes critical nodes for recovery and minimizes an error term combined with a regularization term. However, our model leverages an additional sparsity structure not considered in the kernel methods, further enhancing the efficacy of sparsity-minimizing recovery techniques. Unlike methods that approximate functions from the full space of functions $F(V \times\{0,1,\cdots,T-1\}, \mathbb{R})$, where $V$ denotes vertices and $T$ is the maximum time iteration, our recovery algorithm operates within a subspace characterized by 
\begin{equation}
\left\{f_{(c_1, \cdots, c_k)}(v, t) = \sum_{i=1}^{k} c_i \lambda_i^t \mathbf{u}_i(v) : (c_1, \cdots, c_k) \in \mathbb{R}^k \right\},
\end{equation}
based on our assumptions on graph dynamics.

Our dynamic assumptions are not overly restrictive. For example, in \cite{wei2019optimal}, the authors discuss a deterministic sampling procedure for initially bandlimited (possibly nonlinear) dynamic graph signals, assuming Lyapunov stability—a condition met by many real-world dynamic signals. This supports the notion that nonlinear processes can be approximately linearized, suggesting that our focus on affine-linear dynamical processes is both reasonable and not unduly limiting.

\paragraph{Compressed Sensing and $\ell_1$-Minimization}
In classical signal processing, the sampling of sparse and bandlimited signals has been extensively studied \cite{tropp2009beyond}. A key finding is that for signals with bandwidth $k$ and sparsity level $s \ll k$, approximately $\mathcal{O}(s \log \frac{k}{s})$ samples are required for stable recovery \cite{tropp2009beyond}, representing a significant reduction compared to the traditional Nyquist sampling rate. However, the sampling theory for signals that are sparse in the graph Fourier transform domain remains underdeveloped in graph settings. In our recent work \cite{conference_article}, we employ an $\ell_1$-minimization approach for recovering dynamic graph signals, although the sampling methods are limited to the diffusion process.

During the sampling phase, we utilize a random variable density sampling strategy \cite{puy2011vds, lustic2007mri} adapted to dynamic graphs. The graph coherence parameters we introduced are analogous to those controlling recovery conditions in static contexts \cite{conference_article}, but extending these theorems to dynamic graph signals necessitates further theoretical work.

For the recovery process, we implemented the CoSaMP algorithm \cite{cosamp} due to its simplicity in terms of parameters and implementation. Other  algorithms include LASSO \cite{tibshirani_lasso}, OMP \cite{TroppOMP}, and IHT \cite{blumensath2009iterative} could also be suitable for  the recovery process. 

Below, we provide a pseudo-code for the CoSaMP algorithm, as described in \cite{Foucart2013-sv}. In the algorithm, $H_s$ denotes the hard-thresholding operator of order $s$. For any vector $\mathbf{c}$, $H_s$ retains the $s$ largest entries in absolute value  and sets all other entries to zero.

\begin{algorithm}
\caption{Compressive Sampling Matching Pursuit (CoSaMP) for solving $\Phi \mathbf{c} =\mathbf{y}$}\label{alg:cosamp}
\begin{algorithmic}
\State \textbf{Input:} Measurement matrix: $\Phi$;  Measurement vector: $\mathbf{y}$; Sparsity level: $s$
\State \textbf{Initialization:} $\mathbf{c}^0 = \mathbf{0}$
\While{Halting criterion is not met at $n = \overline{n}$}
\State $V^{n+1} = \operatorname{supp}(\mathbf{c}^n) \cup L_{2s}(\Phi^*(\mathbf{y} - \Phi \mathbf{c}^n))$
\State $\mathbf{v}^{n+1} = \underset{\mathbf{z} \in \mathbb{C}^N}{\operatorname{argmin}}\{\|\mathbf{y} - \Phi \mathbf{z}\|_2, \operatorname{supp}(\mathbf{z}) \subset V^{n+1}\}$
\State $\mathbf{c}^{n+1} = H_s(\mathbf{v}^{n+1})$
\EndWhile
\State \textbf{Output:} $s$-sparse vector $\mathbf{c}^{\sharp} = \mathbf{c}^{\bar{n}}$
\end{algorithmic}
\end{algorithm}

\section{Preliminaries and Notations}\label{sec:intro}
Throughout the paper, column vectors are in bold and {linear operators are also in bold and denoted by uppercase letters}. For any $n \in \mathbb{Z}_{>0}$, we denote $[n] = \{1, 2, \cdots, n\}$. For any vector $\mathbf{x}$, we denote $\text{diag}(\mathbf{x})$ to be the diagonal matrix with diagonal entries contained in $\mathbf{x}$. For a matrix $\mathbf{X} \in \mathbb{R}^{d \times d}$, $\text{diag}(\mathbf{X}) = \text{diag}(\begin{bmatrix}
    \mathbf{X}(1,1), \mathbf{X}(2,2), \cdots, \mathbf{X}(d,d)
\end{bmatrix})$. Lastly, for a sequence of vectors $\mathbf{x}_1, \mathbf{x}_2, \cdots, \mathbf{x}_t$, $\text{diag}(\mathbf{x}_1, \mathbf{x}_2, \cdots, \mathbf{x}_t)$ denotes the diagonal block matrix consisting of $\text{diag}(\mathbf{x}_1), \text{diag}(\mathbf{x}_2), \cdots, \text{diag}(\mathbf{x}_t)$. For matrices, $\text{diag}(\mathbf{X}_1, \mathbf{X}_2, \cdots, \mathbf{X}_t)$ is the diagonal block matrix consisting of $\text{diag}(\mathbf{X}_1), \text{diag}(\mathbf{X}_2), \cdots, \text{diag}(\mathbf{X}_t)$.
\subsection{Graph Signal Processing Preliminaries and Terminologies}
 We first provide some formal definitions related to graph operators and signals. We consider a weighted and undirected graph $\mathcal{G} = (V, E, \mathbf{W})$, where $V = \{v_1, \cdots, v_n\}$ is a set of vertices, $E \subseteq V \times V$ is a set of edges, and $ \mathbf{W}$ is the weighted adjacency matrix. Specifically, if we denote $(v_i, v_j)$ as the edge between $v_i$ and $v_j$ if they are connected with positive weight, $w_{ij} > 0$. $ W$ is defined as
\[
	\mathbf{W}(i,j) =
\begin{cases}
	w_{ij}, & (v_i, v_j) \in E \\
	0, & \text{ otherwise}
\end{cases}.
\]  In particular, $ \mathbf{W}$ is symmetric because $\mathcal{G}$ is undirected.
The degree of a vertex $v_i$ is defined by $\text{deg}(v_i) = \sum_{j=1}^{n} \mathbf{W}(i,j)$, and the diagonal degree matrix of $\mathcal{G}$ is denoted by $\mathbf{D} = \text{diag}(\text{deg}(v_i))$.

With these definitions in place,  the Laplacian operator of $\mathcal{G}$ is defined as $ \mathbf{L} = \mathbf{D} - \mathbf{W}$.
$ \mathbf{L}$ is a positive-semidefinite operator, hence it admits an eigendecomposition $ \mathbf{L} = \mathbf{U} \mathbf{\Sigma} \mathbf{U}^{\top}$ where the columns of $U$ are orthonormal, and $\mathbf{\Sigma}$ is a diagonal matrix containing the eigenvalues $0 \leq \sigma_1 \leq \sigma_2 \leq \cdots \leq \sigma_n$. A classical result from spectral graph theory states that the multiplicity of the eigenvalue 0 corresponds to the number of connected components in the graph. To avoid technical complications, To avoid technical complications, we restrict our analysis to connected graphs, assuming each eigenvalue has multiplicity 1.  So we shall have $\sigma_1=0<\sigma_2<\cdots<\sigma_n$. 
A graph signal is a vector $ \mathbf{x} \in \mathbb{R}^{n}$ defined on the vertices $V$ of $\mathcal{G}$, i.e. $ \mathbf{x}(i)$ is the signal value associated with the node $v_i$. For any graph signal $ \mathbf{x}$ on $\mathcal{G}$, the {graph Fourier transform} of $ \mathbf{x}$ is defined as $ \hat{ \mathbf{x}} = \mathbf{U}^\top \mathbf{x}$, where $ \hat{ \mathbf{x}}$ contains the Fourier coefficients of $ \mathbf{x}$ ordered in increasing frequencies. The inverse Fourier transform is defined naturally as $ \mathbf{x} = \mathbf{U} \hat{ \mathbf{x}}$.

A graph signal can be considered smooth if neighboring nodes have similar signal values. As such, a common definition of smoothness for a graph signal $ \mathbf{x}$ to be the weighted sum of squared differences between every pair of neighboring nodes. This quantity can be represented via the Laplacian as \[ \mathbf{x}^{\top} {\mathbf{L}} \mathbf{x} = \sum_{\substack{(v_i, v_j) \in E \\ i \leq j}}w_{ij}( \mathbf{x}(i) - \mathbf{x}(j))^2.\] 

 For a smooth signal $ \mathbf{x}$, $ \mathbf{x}^{\top}  \mathbf{L} \mathbf{x}$ is small. Since $\mathbf{x}^\top \mathbf{L}\mathbf{x} = \sum_{i=1}^n \sigma_i \hat{\mathbf{x}}(i)^2$, we expect $ \hat{ \mathbf{x}}(i)$ to be close to zero for sufficiently large indices  $i$. This motivates the definition of $k$-bandlimited graph signals and its subset of $s$-sparse signals.
 
\begin{definition}
	A graph signal $ \mathbf{x}$ on $\mathcal{G}$ is $k$-bandlimited for some $k \in \mathbb{Z}_+$ if $ \mathbf{x} \in \text{span}( \mathbf{U}_k)$, where $ \mathbf{U}_k$ denotes the first $k$ columns of $ \mathbf{U}$. Equivalently, $ \mathbf{x}$ is bandlimited if the only nonzero entries of $ \hat{ \mathbf{x}}$ are in the first $k$ components. Furthermore, a $k$-bandlimited graph signal $ \bf{x}$ on $\mathcal{G}$ is $s$-sparse for $s \in \mathbb{Z}_+$ and $s \ll k$ if \[|\text{supp}( \hat{\bf{x}})| = |\{i : \hat{\bf{x}}(i) \neq 0 \}| \leq s.\] 
\end{definition}

This concept is   analogous to   sparse bandlimited signals in classical setting \cite{tropp2009beyond}, extended to finite graphs. In fact, we will show that in the best-case scenario,  the sampling complexity for our graph-based analog is comparable to that of the classical setting. In practice, when the graph signal is sufficiently smooth, we can choose a large upper bound $k$ and consider the signal to be additionally sparse within the subspace spanned by the first $k$ eigenvectors,  $\mathbf{U}_k$. 

In this paper, we will consider $s$-sparse $k$-bandlimited graph signals that evolve over time under a known linear operator $\mathbf{A}$, such as the heat diffusion operator. Our goal is to exploit the underlying physics to achieve efficient space-time sampling.

Now we restrict our attention to the heat diffusion process. Note that if $\mathbf{x} = \sum_{i=1}^k c_i \mathbf{u}_i$, then $\mathbf{A}^t \mathbf{x} = \sum_{i=1}^k c_i \lambda_i^t \mathbf{u}_i$. In other words, the diffusion process preserves the frequency domain. With this observation, we can introduce the following orthonormal space-time extension $\tilde{\mathbf{U}}_{k,T} \in \mathbb{R}^{Tn \times k}$ given by
\[
	\tilde{\mathbf{U}}_{k,T} = 
	\begin{bmatrix}
		\frac{1}{f_T(\lambda_1)} \mathbf{u}_1 & \frac{1}{f_T(\lambda_2)} \mathbf{u}_2 & \cdots & \frac{1}{f_T(\lambda_k)} \mathbf{u}_k \\
		\frac{\lambda_1}{f_T(\lambda_1)} \mathbf{u}_1 & \frac{\lambda_2}{f_T(\lambda_2)} \mathbf{u}_2 & \cdots & \frac{\lambda_k}{f_T(\lambda_k)} \mathbf{u}_k \\
		\vdots & \vdots & \cdots & \vdots \\
		\frac{\lambda_1^{T-1}}{f_T(\lambda_1)} \mathbf{u}_1 & \frac{\lambda_2^{T-1}}{f_T(\lambda_2)} \mathbf{u}_2 & \cdots & \frac{\lambda_k^{T-1}}{f_T(\lambda_k)} \mathbf{u}_k \\
	\end{bmatrix} \in \mathbb{R}^{Tn \times k},
    \] where $f_T(\lambda_i) = \sqrt{\sum_{j=0}^{T-1} \lambda_i^{2j}}$ is introduced to normalize the columns.

Define the space-time diffusive signal as \[
\pi_{\mathbf{A},T}(\mathbf{x}) = \begin{bmatrix}
    \mathbf{x}^\top & (\mathbf{A} \mathbf{x})^\top & \cdots & \left( \mathbf{A}^{T-1}\mathbf{x}\right)^\top
\end{bmatrix}^\top,
\] for which we observe that for any $s$-sparse $k$-bandlimited $\mathbf{x}$,  $\tilde{U}_{k,T}^\top \pi_{\mathbf{A},T}(\mathbf{x})$ remains $s$-sparse. Indeed, if $\mathbf{x}$ is bandlimited of the form $\mathbf{x} = \sum_{i=1}^k   c_i \mathbf{u}_i$, denote $\mathbf{c} = \left[ c_1, \cdots, c_k \right]^\top$, then 
 $\pi_{\mathbf{A},T}(\mathbf{x}) =  \tilde{\mathbf{U}}_{k,T}\text{diag}\left([f_T(\lambda_i)]_{i=1}^k\right)\mathbf{c}$ with $\text{diag}([f_T(\lambda_i)]_{i=1}^k)\mathbf{c}$ being $s$-sparse. Hence we have transformed to dynamic signal into a static one with respect to the dictionary $\tilde{\mathbf{U}}_{k,T}$. 
We introduce the following inequality from \cite{dynamic_sampling}, which will be useful in future discussions. 
\begin{prop}[Lemma 4.3 in \cite{huang2020reconstruction}]\label{prop:embed}
    For any $\mathbf{x} \in \text{span}(\mathbf{U}_k)$, \[
    f_T(\lambda_k) \|\mathbf{x}\|_2 \leq \|\pi_{\mathbf{A},T}(\mathbf{x})\|_2 \leq f_T(\lambda_1) \|\mathbf{x}\|_2.
    \]
\end{prop}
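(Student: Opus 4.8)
The plan is to compute $\|\pi_{\mathbf{A},T}(\mathbf{x})\|_2^2$ explicitly in the graph Fourier basis and then sandwich each frequency's contribution between the extreme values of $f_T$. Write $\mathbf{x} = \sum_{i=1}^k c_i \mathbf{u}_i$, so that $\mathbf{A}^t \mathbf{x} = \sum_{i=1}^k c_i \lambda_i^t \mathbf{u}_i$ by the spectral decomposition of $\mathbf{A} = e^{-\Delta t \mathbf{L}}$. Since the $\mathbf{u}_i$ are orthonormal, $\|\mathbf{A}^t \mathbf{x}\|_2^2 = \sum_{i=1}^k c_i^2 \lambda_i^{2t}$, and stacking the blocks of $\pi_{\mathbf{A},T}(\mathbf{x})$ gives
\[
\|\pi_{\mathbf{A},T}(\mathbf{x})\|_2^2 = \sum_{t=0}^{T-1}\|\mathbf{A}^t \mathbf{x}\|_2^2 = \sum_{i=1}^k c_i^2 \sum_{t=0}^{T-1}\lambda_i^{2t} = \sum_{i=1}^k c_i^2\, f_T(\lambda_i)^2 .
\]

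Next, observe that $\lambda \mapsto f_T(\lambda) = \left(\sum_{j=0}^{T-1}\lambda^{2j}\right)^{1/2}$ is nondecreasing on $[0,\infty)$, being a sum of nondecreasing functions of $\lambda \ge 0$, and that the eigenvalues $\lambda_i = e^{-\Delta t\,\sigma_i}$ of $\mathbf{A}$ satisfy $1 = \lambda_1 > \lambda_2 > \cdots > \lambda_n > 0$ because $0 = \sigma_1 < \sigma_2 < \cdots < \sigma_n$. Hence for every $i \in \{1,\dots,k\}$ we have $f_T(\lambda_k) \le f_T(\lambda_i) \le f_T(\lambda_1)$. Substituting these bounds into the displayed identity and using $\sum_{i=1}^k c_i^2 = \|\mathbf{x}\|_2^2$ yields
\[
f_T(\lambda_k)^2 \|\mathbf{x}\|_2^2 \;\le\; \|\pi_{\mathbf{A},T}(\mathbf{x})\|_2^2 \;\le\; f_T(\lambda_1)^2 \|\mathbf{x}\|_2^2 ,
\]
and taking square roots gives the claim.

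There is essentially no serious obstacle here; the only points requiring care are (i) the monotonicity of $f_T$, which is immediate, and (ii) the correct ordering of the eigenvalues of $\mathbf{A}$ relative to those of $\mathbf{L}$ — the largest $\mathbf{L}$-eigenvalue $\sigma_n$ corresponds to the smallest $\mathbf{A}$-eigenvalue $\lambda_n$, so among the first $k$ frequencies the smallest normalization factor is $f_T(\lambda_k)$ (giving the lower bound) and the largest is $f_T(\lambda_1) = \sqrt{T}$ (giving the upper bound). It is also worth noting where the $k$-bandlimited hypothesis enters: it is precisely what truncates the sum to $i \le k$, so that $f_T(\lambda_1)$ and $f_T(\lambda_k)$ — rather than $f_T(\lambda_1)$ and $f_T(\lambda_n)$ — are the relevant extreme values, and both bounds are sharp (attained by $\mathbf{x} = \mathbf{u}_1$ and $\mathbf{x} = \mathbf{u}_k$ respectively).
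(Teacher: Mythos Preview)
Your argument is correct: the direct spectral computation of $\|\pi_{\mathbf{A},T}(\mathbf{x})\|_2^2 = \sum_{i=1}^k c_i^2 f_T(\lambda_i)^2$ together with the monotonicity of $f_T$ and the ordering $\lambda_1 \ge \cdots \ge \lambda_k$ is exactly the natural proof. The paper itself does not supply a proof of this proposition but simply cites it as Lemma~4.3 of \cite{huang2020reconstruction}, so there is nothing further to compare against; your write-up would serve perfectly well as an in-line justification.
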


\section{Dynamical Sampling and Graph Signal Recovery}
Recall that  $\mathbf{x} = \mathbf{x}_0$ is  $s$-sparse and $k$-bandlimited, $\mathbf{A}$ is a heat diffusion operator, and $\mathbf{x}_t = \mathbf{A}^{t} \mathbf{x}_0$ for $t \in \{0\}\cup[T-1]$. We assume that samples are taken from $\{\mathbf{x}_t\}_{t=0}^{T-1}$ at each time step according to some probability distribution $\mathbf{p}_t$. Our goal is to establish precise conditions on the number of samples required for robustly recovering the initial signal $\mathbf{x}$.

\paragraph{Formulating the space-time sampling problem as an $\ell_1$ recovery problem}

Note that our sampling and reconstruction problem can be formulated as a linear inverse problem 
\begin{align}\label{orp}
\mathbf{y} = \mathbf{S}(\pi_{\mathbf{A},T}(\mathbf{x})) + \mathbf{e}
\end{align}
where \(\mathbf{e}\) is some observational noise. We aim to exploit the sparsity of \(\mathbf{x}\) in the spectral domain. To do this, we introduce a set of operators to transform our problem into the spectral domain.

\begin{definition}\label{def:probs}
\begin{enumerate}[label=(\roman*),leftmargin=.25in]
    \item \textit{The Sampling selection operator}: Define the sampling set by \(\Omega= \bigcup_{t=0}^{T-1} \Omega_t\) where \(\Omega_t=\left\{\omega_{t,1}, \cdots, \omega_{t,m_t}\right\}\) is constructed by drawing \(m_t\) indices independently (with replacement) from \([n]\) according to \(\mathbf{p}_t\), i.e., \(\mathbb{P}(\omega_{t,j} = i)=\mathbf{p}_t(i)\), for all \(j \in[m_t]\) and \(i \in[n]\). We define the sampling matrix
    \[
    \mathbf{S}=\operatorname{diag}(\mathbf{S}_0 ; \cdots ; \mathbf{S}_{T-1}),
    \]
    where \(\mathbf{S}_t \in \mathbb{R}^{m_t \times n}\) is defined as 
    \[
    \mathbf{S}_t(i, j)=\begin{cases}
    1, & \text{ if } j=\omega_i \\ 
    0, & \text{ otherwise }
   \end{cases}.
    \]
    The total number of space-times samples  is \(M = \sum_{t=0}^{T-1} m_t\).
    
    \item \textit{The reweighting operator}: Let \(\mathbf{p}_t \in \mathbb{R}^n\) be probability distributions on the nodes \([n]\) of \(\mathcal{G}\) for \(t \in \{0\}\cup[T-1]\). We define the probability matrix \(\mathbf{P} \in \mathbb{R}^{Tn \times Tn}\) by \(\mathbf{P} = \text{diag}(\mathbf{p}_0, \cdots, \mathbf{p}_{T-1})\) and we introduce the matrices \(\mathbf{P}_{\Omega} = \mathbf{S} \mathbf{P} \mathbf{S}^\top\) and 
    \[
    \mathbf{W} = \text{diag}\left( \frac{1}{m_0} I_{m_0}, \cdots, \frac{1}{m_{T-1}} I_{m_{T-1}}\right).
    \]
    The reweighted operator is defined as \(\mathbf{\Psi} = \mathbf{P}_{\Omega}^{-\frac{1}{2}} \mathbf{W}^{\frac{1}{2}}\). Here we assume the probability distributions are nondegenerate, i.e., all entries are nonzero. The associated probability matrices \(\mathbf{P}\) are not needed in the node selection process, but will return as a part of a reweighting matrix in the recovery process.
\end{enumerate}
\end{definition}

\begin{prop}
Assume that \(\mathbf{x}\) is precisely \(s\)-sparse, and \(\mathbf{e}=0\). Then \eqref{orp} can be equivalently formulated as 
\begin{equation}
\mathbf{\Psi} \mathbf{S} \tilde{\mathbf{U}}_{k,T}\mathbf{c} =  \widetilde{\mathbf{y}}
\end{equation}
where we define \(\widetilde{\mathbf{y}}=\mathbf{\Psi}\mathbf{y}\). 
\end{prop}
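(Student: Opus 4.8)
The plan is to unwind the definitions and observe that the claimed identity is essentially a tautology once the sparsity assumption is used to rewrite $\pi_{\mathbf{A},T}(\mathbf{x})$ in terms of the dictionary $\tilde{\mathbf{U}}_{k,T}$. First I would recall from the discussion preceding Proposition~\ref{prop:embed} that if $\mathbf{x}$ is $s$-sparse and $k$-bandlimited, say $\mathbf{x} = \sum_{i=1}^k c_i \mathbf{u}_i$ with $\mathbf{c} = [c_1,\dots,c_k]^\top$ having $|\operatorname{supp}(\mathbf{c})| \le s$, then
\[
\pi_{\mathbf{A},T}(\mathbf{x}) = \tilde{\mathbf{U}}_{k,T}\,\operatorname{diag}\!\left([f_T(\lambda_i)]_{i=1}^k\right)\mathbf{c}.
\]
So, strictly speaking, the vector $\mathbf{c}$ appearing in the statement should be understood as $\operatorname{diag}([f_T(\lambda_i)]_{i=1}^k)$ times the Fourier coefficient vector of $\mathbf{x}$ (this is harmless since $f_T(\lambda_i) > 0$ and the diagonal rescaling preserves the support, hence $s$-sparsity). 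Substituting this expression into \eqref{orp} with $\mathbf{e}=0$ gives $\mathbf{y} = \mathbf{S}\tilde{\mathbf{U}}_{k,T}\mathbf{c}$.

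Next I would apply the reweighting operator $\mathbf{\Psi} = \mathbf{P}_\Omega^{-1/2}\mathbf{W}^{1/2}$ to both sides. Since $\mathbf{\Psi}$ is a well-defined (invertible, diagonal, hence bounded) linear map — here the nondegeneracy assumption on the $\mathbf{p}_t$ guarantees $\mathbf{P}_\Omega$ is invertible, as $\mathbf{P}_\Omega = \mathbf{S}\mathbf{P}\mathbf{S}^\top$ is diagonal with strictly positive entries $\mathbf{p}_t(\omega_{t,j}) > 0$ — left-multiplying is an equivalence of linear systems. This yields
\[
\mathbf{\Psi}\mathbf{S}\tilde{\mathbf{U}}_{k,T}\mathbf{c} = \mathbf{\Psi}\mathbf{y} =: \widetilde{\mathbf{y}},
\]
which is exactly the asserted reformulation. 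Conversely, multiplying by $\mathbf{\Psi}^{-1}$ recovers \eqref{orp}, so the two systems have the same solution set; in particular the $s$-sparse $\mathbf{x}$ (equivalently its rescaled coefficient vector $\mathbf{c}$) is a solution of one iff it is a solution of the other.

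There is no real obstacle here — the statement is a bookkeeping lemma that sets up the compressed-sensing formulation, and the only subtlety is the implicit identification of "$\mathbf{c}$" with the $f_T$-rescaled Fourier coefficients rather than the raw coefficients, which I would make explicit in one line. The one point worth stating carefully is \emph{why} $\mathbf{\Psi}$ is legitimately invertible: it suffices to note $\mathbf{P}_\Omega$ and $\mathbf{W}$ are both diagonal with strictly positive diagonal entries (the former by nondegeneracy of each $\mathbf{p}_t$, the latter since each $m_t \ge 1$), so $\mathbf{\Psi}$ is a positive diagonal matrix and multiplication by it is a bijection on $\mathbb{R}^M$. Everything else is substitution.
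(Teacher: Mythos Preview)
Your proposal is correct. The paper does not give an explicit proof of this proposition---it is stated without argument, as an immediate consequence of the preceding definitions---and your unwinding of those definitions (expressing $\pi_{\mathbf{A},T}(\mathbf{x})$ via $\tilde{\mathbf{U}}_{k,T}$, substituting into \eqref{orp}, and left-multiplying by the invertible diagonal matrix $\mathbf{\Psi}$) is exactly the implicit reasoning the paper relies on.
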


Here the re-weighted operator $\mathbf{\Psi}$ is applied on \eqref{orp} to ensure numerical stability and later we show the re-weighted sampling matrix is well conditioned with high probability. Under this formulation, the random matrix \(\mathbf{\Psi} \mathbf{S}\tilde{\mathbf{U}}_{k,T}\) is a sampling operator on sparse vectors. To exploit sparsity, \(\ell_1\)-minimization is natural to consider. Therefore, we propose 

\begin{equation}
\label{eq:noiseless_problem}
\min_{\mathbf{c} \in \mathbb{R}^k} \|\mathbf{c}\|_1 \text{ subject to } \underbrace{\mathbf{\Psi} \mathbf{S} \tilde{\mathbf{U}}_{k,T}}_{\Phi}\mathbf{c} =  \widetilde{\mathbf{y}}.
\end{equation}

Whenever \(\mathbf{e} \neq 0\), we propose to recover \(\mathbf{x}\) from the samples \(\mathbf{y}\) by solving the following minimization problem:
\begin{equation}
\label{eq:noisy_problem}
\min_{\mathbf{c} \in \mathbb{R}^k} \|\mathbf{c}\|_1 \text{ subject to } \left\|\mathbf{\Psi}\mathbf{S} \tilde{\mathbf{U}}_{k,T}\mathbf{c} -  \widetilde{\mathbf{y}}\right\|_2 \leq \eta,
\end{equation}
with \(\eta\) chosen such that \(\eta \geq \|\mathbf{\Psi} \mathbf{e}\|_2\) to incorporate the noise level.

There are many algorithms available to solve the \(\ell_1\)-minimization problem. Some examples include LASSO \cite{tibshirani_lasso}, OMP \cite{TroppOMP}, or IHT \cite{blumensath2009iterative}. Most \(\ell_1\)-minimization algorithms work well for our recovery tasks, but we choose the CoSaMP algorithm \cite{cosamp} in all experiments for its simple set of parameters and implementation. A pseudocode implementation \cite{Foucart2013-sv} of the CoSaMP algorithm is given in \Cref{alg:cosamp}. \(H_s\) denotes the hard-thresholding operator of order \(s\), where for any vector \(\mathbf{c}\), \(H_s\) keeps the \(s\) largest entries in absolute value (where tiebreaks are determined by lexicographic order) and sets the other entries to \(0\).

Note that the recovery algorithm is applicable as long as \(\mathbf{x}\) is \(k\)-bandlimited, rather than also exactly \(s\)-sparse. Sufficiently smooth \(\mathbf{x}\) are \textit{approximately} sparse in the frequency domain, and algorithms such as CoSaMP allow for specifying a target sparsity level as a parameter. The output of CoSaMP then provides the best \(s\)-sparse solution, and thereby produces the best \(s\)-sparse approximation to \(\mathbf{x}\).

Our main objective moving forward is to minimize the resources required so that the frequency vectors produced by the recovery Problems \eqref{eq:noiseless_problem} and \eqref{eq:noisy_problem}  are close to the true solution. There are two aspects to this objective. One aspect is the sampling cost.  We aim to minimize the number of samples required to guarantee accurate recovery. For a particular \(s\)-sparse \(k\)-bandlimited signal, it is known that there exists a deterministic sampling set of size \(2s\) that uniquely recovers the signal. In our case, we wish to design the probability distributions in \Cref{def:probs} so that \textit{all} approximately \(s\)-sparse and \(k\)-bandlimited signals can be recovered robustly with as few samples as possible. Another aspect is the computational cost. This cost is algorithm-dependent and can vary on a case-by-case basis due to the random sampling procedure. We will discuss when our random sampling procedure generates a sampling operator \(\Phi\) that satisfies a restricted isometry property, and we will provide the CoSaMP-specific computational costs.

\subsection{Restricted Isometry Property of  $\mathbf{\Psi} \mathbf{S} \tilde{\mathbf{U}}_{k,T}^\top$}
In compressed sensing, the Restricted Isometry Property (RIP) plays a pivotal role in ensuring the uniqueness and stability of solutions in recovery problems. Recall that a matrix $ \mathbf\Phi $ meets the $ s $-RIP criteria with a constant $ \delta_s \in (0,1) $ if, for every $ s $-sparse vector $ \mathbf{z} $, the following relationship is maintained:

\[
(1 - \delta_s) \| \mathbf{z}\|_2^2 \leq \|\mathbf\Phi \mathbf{z}\|_2^2 \leq (1 + \delta_s) \|\mathbf{z}\|_2^2.
\]

In literature, the RIP properties for random matrices with i.i.d. entries sampled from well-behaved probability distributions are well established. In the context of our recovery problem \eqref{eq:noisy_problem}, our focus lies in identifying the required sample size to ensure the RIP for the sampling operator \(\mathbf{\Psi} \mathbf{S}\tilde{\mathbf{U}}_{k,T}^\top\), which are structured and constrained by dynamical systems. This prevents us from directly using the existing results and necessitates nontrivial efforts, which we shall discuss in \Cref{thm:RIP}.

We first introduce a graph dynamical weighted incoherence parameters that characterize the sampling complexity required to ensure the RIP   of the corresponding sampling operator.  

\begin{definition}[Dynamic spectral graph weighted coherence for sparse recovery]\label{def:coherence}
Let $\mathcal{S} \subset [k]$ be a specific subset containing $s$ elements and we call the collections of all the subset of $[k]$ with $s$ elements $s$-subsets. The parameter $\nu_{k,T}$ represents the dynamical spectral graph weighted coherence parameter and its definition is based on the restrictions of the matrix $\widetilde{\mathbf{U}}_{k,T}$ on various $s$-subsets. To ease of the presentation, we shall drop the indices $k,T$, and refer to the submatrix of $\widetilde{\mathbf{U}}_{k,T}$ with column indexed by $\mathcal{S}$ and rows from  $t n + 1$ to $(t+1)n$ as $\tilde{\mathbf{U}}_{\tau_t,\mathcal{S}}$ i.e., $\tilde{\mathbf{U}}_{\tau_t,\mathcal{S}}:=\widetilde{\mathbf{U}}_{k,T}(tn+1:(t+1)n,\mathcal{S})$.  At time $t = 0, 1, \cdots, T-1$, define \[\nu_{k,t} = \max\limits_{\mathcal{S} \subset [k]} \max\limits_{1 \leq i \leq n} \frac{\|\tilde{\mathbf{U}}_{\tau_t,\mathcal{S}}^\top \delta_i\|_\infty}{\sqrt{\mathbf{p}_t(i)}}=\max\limits_{\{j\} \subset [k]} \max\limits_{1 \leq i \leq n} \frac{\|\tilde{\mathbf{U}}_{\tau_t,j}^\top \delta_i\|_\infty}{\sqrt{\mathbf{p}_t(i)}}= \max_{i\in [n],j\in [k]} \frac{|\tilde{\mathbf{U}}_{tn+i,j}|}{\sqrt{\mathbf{p}_t(i)}},\] where the last two identities follows from the property of $\ell^{\infty}$ norm. We then define $\nu_{k,T} = \begin{bmatrix}
            \nu_{k,0}, \cdots, \nu_{k,T-1}
        \end{bmatrix}^\top$, so that $\nu_{k,T}(t) = \nu_{k,t}$. 
       \end{definition}

The coherence parameter introduced in \Cref{def:coherence} differs from that used in the bandlimited case without a sparsity constraint \cite{huang2020reconstruction}. Specifically, we use the \(\ell_{\infty}\) norm for the rows of \(\tilde{\mathbf{U}}\), whereas the bandlimited case relies on the \(\ell_2\) norm. 
It is also worth noting that, when the eigenvalues of the Laplacian matrix have multiplicities, its eigen-decomposition is not unique. Our approach suggests that selecting an eigen-basis with the most spread out vectors, minimizing the infinity norm for each column, is desirable.

 Our definition is independent of the signal's sparsity, which may not fully capture variations in sparsity. However, we show that the sampling complexity for our recovery problem at each time step is quadratically proportional to this parameter, ensuring the RIP  of the sampling operator, summarized below.

\begin{theorem}[RIP   of sampling operator]\label{mainthm} Let $\delta\in (0,1)$, if 
\begin{align}\label{ripbound3}
m_t \geq C\nu_{k,t}^2\delta^{-2}s\log^2(s)\log(k)\log(nT), t=0,\cdots, T-1,
\end{align}then with probability at least $1-(nT)^{-\log^2(s)\log(k)}$ the restricted isometry constant $\delta_s$ of the sampling operator $\mathbf{\Psi} \mathbf{S} \tilde{\mathbf{U}}_{k,T}$ satisfies $\delta_s \leq 3\delta$. The constant $C>0$ is universal. 
\end{theorem}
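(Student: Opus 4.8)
The plan is to reduce the statement to a known RIP theorem for bounded orthonormal systems (or, more precisely, to a sampling result for sparse recovery with a preconditioned/reweighted sampling operator, in the spirit of Rauhut or Foucart–Rauhut Chapter 12), applied blockwise in time and then aggregated. First I would record the structural fact, already established in the excerpt, that $\pi_{\mathbf{A},T}(\mathbf{x}) = \tilde{\mathbf{U}}_{k,T}\,\diag([f_T(\lambda_i)]_i)\,\mathbf{c}$, so that the sampling operator $\mathbf{\Psi}\mathbf{S}\tilde{\mathbf{U}}_{k,T}$ acts on $s$-sparse coefficient vectors, and that $\tilde{\mathbf{U}}_{k,T}$ has orthonormal columns. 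Because $\mathbf{S}=\diag(\mathbf{S}_0;\cdots;\mathbf{S}_{T-1})$, $\mathbf{W}$, and $\mathbf{P}_\Omega$ are all block diagonal in time, the operator decouples: $\mathbf{\Psi}\mathbf{S}\tilde{\mathbf{U}}_{k,T} = \frac{1}{\sqrt{M}}[\cdots]$ can be written as a vertical stack of the time-$t$ blocks $\frac{1}{\sqrt{m_t}}\mathbf{P}_{\Omega_t}^{-1/2}\mathbf{S}_t\tilde{\mathbf{U}}_{\tau_t}$, and $\|\mathbf{\Psi}\mathbf{S}\tilde{\mathbf{U}}_{k,T}\mathbf{z}\|_2^2 = \sum_{t=0}^{T-1}\frac{1}{m_t}\|\mathbf{P}_{\Omega_t}^{-1/2}\mathbf{S}_t\tilde{\mathbf{U}}_{\tau_t}\mathbf{z}\|_2^2$. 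The key normalization to check is that $\mathbb{E}\big[\frac{1}{m_t}(\mathbf{S}_t\mathbf{P}_t^{-1/2}\tilde{\mathbf{U}}_{\tau_t})^\top(\mathbf{S}_t\mathbf{P}_t^{-1/2}\tilde{\mathbf{U}}_{\tau_t})\big]$ summed over $t$ equals $\tilde{\mathbf{U}}_{k,T}^\top\tilde{\mathbf{U}}_{k,T}=\mathbf{I}_k$; this follows because each row $\omega_{t,j}$ is drawn with probability $\mathbf{p}_t(i)$, so the $\mathbf{p}_t(i)^{-1}$ reweighting makes each sampled row an unbiased estimator of $\tilde{\mathbf{U}}_{\tau_t}^\top\tilde{\mathbf{U}}_{\tau_t}$, and $\sum_t \tilde{\mathbf{U}}_{\tau_t}^\top\tilde{\mathbf{U}}_{\tau_t} = \mathbf{I}_k$ by construction of $\tilde{\mathbf{U}}_{k,T}$ (the $f_T(\lambda_i)$ normalization).

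Next I would set up the concentration argument. Write $\mathbf{\Phi} = \mathbf{\Psi}\mathbf{S}\tilde{\mathbf{U}}_{k,T}$ and, for a fixed $s$-subset $\mathcal{S}\subset[k]$, study $\|\mathbf{\Phi}_{\mathcal{S}}^\top\mathbf{\Phi}_{\mathcal{S}} - \mathbf{I}_s\|_{2\to2}$. This is a sum over all $t$ and all $j\in[m_t]$ of independent, centered, self-adjoint random matrices $X_{t,j} = \frac{1}{m_t}\big(\mathbf{p}_t(\omega_{t,j})^{-1}(\tilde{\mathbf{U}}_{\tau_t})_{\omega_{t,j},\mathcal{S}}(\tilde{\mathbf{U}}_{\tau_t})_{\omega_{t,j},\mathcal{S}}^\top - \tilde{\mathbf{U}}_{\tau_t,\mathcal{S}}^\top\tilde{\mathbf{U}}_{\tau_t,\mathcal{S}}\big)$ — but the right tool here is actually the matrix-valued Chernoff/Bernstein bound or, better for getting the $\log^2(s)$ factor, the approach via the symmetrization + Rudelson's lemma / Dudley entropy bound used for RIP of subsampled bounded orthonormal systems. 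The operator norm $\|\mathbf{p}_t(i)^{-1/2}(\tilde{\mathbf{U}}_{\tau_t})_{i,\mathcal{S}}\|_2 \le \sqrt{s}\,\nu_{k,t}$ by Definition~\ref{def:coherence}, which gives the per-sample bound feeding the Bernstein/Rudelson estimate; the variance term is controlled by $\max_t \nu_{k,t}^2 \cdot \|\sum_t \tilde{\mathbf{U}}_{\tau_t,\mathcal{S}}^\top\tilde{\mathbf{U}}_{\tau_t,\mathcal{S}}\|_{2\to2} = \max_t\nu_{k,t}^2$. Requiring $m_t \gtrsim \nu_{k,t}^2\delta^{-2}s\log^2(s)\log(k)\log(nT)$ for every $t$ then makes the deviation for a fixed $\mathcal{S}$ smaller than, say, $2\delta$ with failure probability at most $(nT)^{-c\log^2(s)\log(k)}$; a union bound over the $\binom{k}{s}\le k^s$ choices of $\mathcal{S}$, absorbed by the $\log(k)$ and $s$ factors in the exponent (note $s\log k \le \log^2(s)\log(k)\log(nT)$ for the regime of interest), upgrades this to a uniform RIP bound $\delta_s \le 3\delta$. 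I would need to be a little careful that the time-blocks have heterogeneous sizes $m_t$; the cleanest route is to prove the deviation bound for each block separately with its own $m_t$ and then sum, using that $\sum_t (\text{block } t) = \mathbf{I}$ in expectation, rather than treating all $M$ samples as i.i.d.

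The main obstacle I anticipate is twofold. First, the sensing matrix is *not* a clean subsampled bounded orthonormal system: the rows of $\tilde{\mathbf{U}}_{k,T}$ are structured (each time-$t$ block is $\lambda_i^t/f_T(\lambda_i)$ times a fixed eigenvector $\mathbf{u}_i$), and, crucially, the columns of $\tilde{\mathbf{U}}_{k,T}$ are orthonormal only after summing over *all* time blocks, not within a single block — so no single block $\tilde{\mathbf{U}}_{\tau_t}$ is itself near-isometric, and one genuinely needs the aggregation-over-$t$ step to recover orthonormality. Getting the bookkeeping right so that the coherence $\nu_{k,t}$ (an $\ell_\infty$-type quantity on rows, rescaled by $\mathbf{p}_t$) is exactly the parameter that shows up — rather than an $\ell_2$ row norm as in the no-sparsity bandlimited case of \cite{huang2020reconstruction} — is the technical heart. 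Second, producing the precise $\log^2(s)\log(k)\log(nT)$ dependence (as opposed to a cruder $s\,\mathrm{polylog}$) requires the more delicate chaining/Rudelson–Vershynin argument rather than a naive matrix Bernstein bound, and threading the heterogeneous $m_t$'s through that argument is where I expect the real work to lie. I would handle this by invoking the RIP-for-weighted-sampling machinery (Theorem 12.x-style statements in Foucart–Rauhut, or the variable-density results of \cite{puy2018random} suitably generalized) as a black box applied to the stacked operator, after verifying its two hypotheses: the expectation identity $\mathbb{E}[\mathbf{\Phi}^\top\mathbf{\Phi}]=\mathbf{I}_k$ and the uniform coherence bound $\sup_{t,i}\mathbf{p}_t(i)^{-1/2}\|(\tilde{\mathbf{U}}_{\tau_t})_{i,:}\|_\infty \le \nu_{k,t}$.
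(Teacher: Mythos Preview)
Your high-level plan --- verify $\mathbb{E}[\mathbf{\Phi}^\top\mathbf{\Phi}]=\mathbf{I}_k$, identify the row-coherence bound $\|\mathbf{X}_{t,i}\|_\infty \le \nu_{k,t}/\sqrt{m_t}$, and feed these into the symmetrization/Dudley machinery of Foucart--Rauhut Chapter~12 --- is exactly the paper's route. But two of the concrete steps you sketch do not work as written. First, the union bound over $\binom{k}{s}$ support sets is \emph{not} how the $\log^2(s)\log(k)$ dependence arises, and your claim that ``$s\log k \le \log^2(s)\log(k)\log(nT)$ for the regime of interest'' is false in general (take $s=100$, $nT=10^4$); a fixed-$\mathcal{S}$ matrix-Bernstein argument followed by a union bound over $\mathcal{S}$ would cost an extra factor of $s$ in $m_t$. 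The paper never fixes $\mathcal{S}$: it works directly with the seminorm $\interleave\cdot\interleave_s=\sup_{\mathbf{z}\in D_{s,k}}|\langle\cdot\,\mathbf{z},\mathbf{z}\rangle|$, symmetrizes to a Rademacher process indexed by $D_{s,k}$, and bounds that via Dudley's inequality together with a covering-number estimate for $D_{s,k}$ in the norm $\|\cdot\|_X$ (the paper's Lemma~\ref{lmm:rademacher_est}). The supremum over sparse supports is absorbed inside the chaining, not by an external union bound, and this is precisely what yields the linear-in-$s$ sample complexity.

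Second, your proposal to ``prove the deviation bound for each block separately and then sum'' would pick up a spurious factor of $T$, since a triangle inequality over $T$ blocks each of deviation $\delta$ gives only $T\delta$; moreover no single block has expectation $\mathbf{I}$, as you yourself note. The paper instead pools all $M=\sum_t m_t$ samples into a single sum $\sum_{t,i}\mathbf{X}_{t,i}\mathbf{X}_{t,i}^\top$ of independent (but \emph{not} identically distributed) rank-one terms and runs the entire argument --- symmetrization, Dudley via Lemma~\ref{lmm:rademacher_est}, and then a Talagrand-type deviation inequality for concentration --- on that full sum. Lemma~\ref{lmm:rademacher_est} requires only the uniform bound $K_\infty=\max_{t,i}\|\mathbf{X}_{t,i}\|_\infty$, and the per-time condition $m_t\ge C\nu_{k,t}^2\delta^{-2}s\,\mathrm{polylog}$ is exactly what makes $K_\infty$ small \emph{uniformly} across all $(t,i)$. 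For the same reason, a direct black-box citation of a Foucart--Rauhut RIP theorem does not quite apply (those are stated for i.i.d.\ rows drawn from one distribution); the paper re-derives the key lemma in this heterogeneous setting. The short proof of Theorem~\ref{mainthm} in the body then just specializes the parameters $\eta=\beta=\delta$ and $\epsilon=(nT)^{-\log^2(s)\log(k)}$ in the appendix result.
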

\begin{proof}First, it follows by a ratio test that \eqref{ripbound3} with an appropriate universal constant $C>0$ implies \eqref{ripbound2} with $\eta=\delta$. Second, if we choose $\epsilon =(nT)^{-\log^2(s)\log(k)}$, then \eqref{ripbound3} implies \eqref{ripbound1} in  \Cref{thm:RIP} with $\beta=\delta$. Therefore,  \Cref{thm:RIP} implies $\delta_s\leq 3\delta$ with probability at least $1-(nT)^{-\log^2(s)\log(k)}$. 
\end{proof}

In Section \ref{sec:min_complexity} below, we discussed how to minimize the coherence. We show in some scenarios, $\nu_{k,t}^2$ can be as small as $\frac{1}{T}$, in this case, the sampling abound of $m_t$ is nearly proptional to classical case by a factor $1/T$, this shows a perfect time-space trade off. More numerical results will be presented in Section \ref{sec:experiments}.

\subsection{Recovery Guarantees}\label{sec:analysis}
In general, the sampling complexity for unique or stable recovery is algorithm-dependent. Since we are using the CoSaMP algorithm, we will provide the recovery theorem for CoSaMP using the notation introduced in \Cref{alg:cosamp}. Additionally, we denote $\mathcal{S} \subset [k]$ for subsets with $s$ elements, $\mathbf{c}_\mathcal{S}$ as the vector $\mathbf{c}$ with entries outside $\mathcal{S}$ set to zero, and $\mathbf{c}_{\overline{\mathcal{S}}}$ as the vector $\mathbf{c}$ with entries on $\mathcal{S}$ to zero.

We present the recovery theorem for the noisy setting \eqref{eq:noisy_problem}, where $\mathbf{c} = \tilde{\mathbf{U}}_{k,T}^\top \mathbf{x}$ is only approximately $s$-sparse, and our observation is contaminated with additive noise. 
The $\ell_1$-error of best $s$-sparse approximation to a vector $\mathbf{c}$ is defined by 
\[
\sigma_s(\mathbf{c}) = \inf_{\|\mathbf{z}\|_0 \leq s} \|\mathbf{c} - \mathbf{z}\|_1. 
\]
\begin{prop}\label{thm:cosamp_stable}
    Let $\mathbf{y} = \mathbf{S} \pi_{\mathbf{A},T}(\mathbf{x}) + \mathbf{e}$ be a noisy observation of a $k$-bandlimited graph signal $\mathbf{x}$ that is not necessarily $s$-sparse, and denote $\mathbf{c} = \tilde{\mathbf{U}}_{k,T}^\top \pi_{\mathbf{A},T}(\mathbf{x})$. Suppose that $m_t \geq \tilde{C}\nu_{k,t}^2 s\log^2(8s)\log(k)\log(nT), t=0,\cdots, T-1$,   then the $n$th iteration $\mathbf{c}^n$ generated by the CoSaMP  \Cref{alg:cosamp} satisfies the following bounds with probability at least $1-(nT)^{-\log^2(8s)\log(k)}$, we have \begin{align*}
        \|\mathbf{c} - \mathbf{c}^n\|_2 &\leq \frac{C_1}{\sqrt{s}} \sigma_s(\mathbf{c}) + D \|\Psi \mathbf{e}\|_2 + 2 \rho^n \|\mathbf{c}\|_2,
    \end{align*}
    where the constant $\tilde{C} > 0$ is universal, and the constants $C_1, D > 0$ and $0 < \rho < 1$ depend only on $\delta_{8s}$. 
\end{prop}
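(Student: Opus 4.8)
The plan is to deduce this from the standard stable-and-robust recovery guarantee for CoSaMP together with the restricted isometry property established in \Cref{mainthm}. Two reductions are needed: first, rewrite the reweighted observation $\widetilde{\mathbf{y}} = \mathbf{\Psi}\mathbf{y}$ as a noisy linear measurement $\Phi\mathbf{c} + \mathbf{\Psi}\mathbf{e}$ of the (approximately $s$-sparse) coefficient vector $\mathbf{c}$; second, make $\Phi = \mathbf{\Psi}\mathbf{S}\tilde{\mathbf{U}}_{k,T}$ satisfy an RIP of the order required by CoSaMP, and then quote that recovery theorem verbatim.

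For the first reduction: since $\mathbf{x}$ is $k$-bandlimited, the computation preceding \Cref{prop:embed} shows $\pi_{\mathbf{A},T}(\mathbf{x}) = \tilde{\mathbf{U}}_{k,T}\,\mathrm{diag}([f_T(\lambda_i)]_{i=1}^k)\mathbf{c}'$ where $\mathbf{x} = \sum_{i=1}^k c_i'\mathbf{u}_i$; in particular $\pi_{\mathbf{A},T}(\mathbf{x})$ lies in the column span of $\tilde{\mathbf{U}}_{k,T}$, and since the columns of $\tilde{\mathbf{U}}_{k,T}$ are orthonormal, $\tilde{\mathbf{U}}_{k,T}\mathbf{c} = \tilde{\mathbf{U}}_{k,T}\tilde{\mathbf{U}}_{k,T}^\top\pi_{\mathbf{A},T}(\mathbf{x}) = \pi_{\mathbf{A},T}(\mathbf{x})$. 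Hence $\Phi\mathbf{c} = \mathbf{\Psi}\mathbf{S}\pi_{\mathbf{A},T}(\mathbf{x})$ and $\widetilde{\mathbf{y}} = \mathbf{\Psi}\mathbf{y} = \Phi\mathbf{c} + \mathbf{\Psi}\mathbf{e}$, so running \Cref{alg:cosamp} on the pair $(\Phi, \widetilde{\mathbf{y}})$ with sparsity parameter $s$ is exactly CoSaMP applied to the noisy linear system $\widetilde{\mathbf{y}} = \Phi\mathbf{c} + \mathbf{e}'$, $\mathbf{e}' = \mathbf{\Psi}\mathbf{e}$, where $\mathbf{c}$ need not be sparse.

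For the second reduction: apply \Cref{mainthm} with the sparsity level $s$ replaced by $8s$. This needs $m_t \ge C\nu_{k,t}^2\delta^{-2}(8s)\log^2(8s)\log(k)\log(nT)$ and gives, with probability at least $1 - (nT)^{-\log^2(8s)\log(k)}$, that $\Phi$ has $8s$-restricted isometry constant $\delta_{8s}\le 3\delta$. Choose $\delta$ to be a small enough universal constant so that $3\delta$ lies below the threshold on $\delta_{8s}$ demanded by the CoSaMP recovery theorem (the bound of \cite{Foucart2013-sv}, or the improved one of \cite{zhao2023improved}; since $\delta_{4s}\le\delta_{8s}$, a $\delta_{8s}$-bound suffices even if that theorem is phrased via $\delta_{4s}$, and the constants, being monotone in the RIP constant, can then be taken as functions of $\delta_{8s}$). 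With this choice the factors $8$, $\delta^{-2}$ and the fixed threshold are all absorbed into a single universal constant $\tilde{C}$, yielding exactly the stated sampling hypothesis and failure probability. On the resulting RIP event, the CoSaMP stable recovery theorem applied to $(\Phi, \widetilde{\mathbf{y}}, \mathbf{e}')$ produces $\|\mathbf{c} - \mathbf{c}^n\|_2 \le 2\rho^n\|\mathbf{c}\|_2 + C_1 s^{-1/2}\sigma_s(\mathbf{c}) + D\|\mathbf{\Psi}\mathbf{e}\|_2$ with $0 < \rho < 1$ and $C_1, D > 0$ depending only on $\delta_{8s}$, which is the claim.

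Given \Cref{mainthm}, nothing here is deep; the steps requiring care are (a) applying the RIP theorem at exactly the order the cited CoSaMP theorem needs — the $8s$, hence the $\log^2(8s)$ factor and the exponent $\log^2(8s)\log(k)$, is forced by needing control of $\delta_{8s}$ rather than $\delta_s$; (b) fixing $\delta$ as an absolute constant so that it affects only $\tilde{C}$ and not the form of the error bound; and (c) keeping the reweighting $\mathbf{\Psi}$ on both the operator and the data so that the residual noise enters as $\mathbf{\Psi}\mathbf{e}$ — consistent with the choice $\eta \ge \|\mathbf{\Psi}\mathbf{e}\|_2$ in \eqref{eq:noisy_problem}.
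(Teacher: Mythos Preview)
Your proposal is correct and follows essentially the same approach as the paper: invoke the CoSaMP stable recovery theorem from \cite{Foucart2013-sv} (the paper cites Theorem~6.28 there, which requires $\delta_{8s}<0.4782$), and secure that RIP condition by applying \Cref{mainthm} with $s$ replaced by $8s$ and $\delta$ fixed to a small absolute constant (the paper takes $\delta=0.15$). Your write-up is somewhat more explicit about the reduction $\widetilde{\mathbf{y}} = \Phi\mathbf{c} + \mathbf{\Psi}\mathbf{e}$, but the argument is the same.
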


\begin{proof}[Proof of \Cref{thm:cosamp_stable}]

According to Theorem 6.28 from \cite{Foucart2013-sv}, it suffices to require $\delta_{8s} < 0.4782$, where $\delta_{8s}$ denotes the $8s$-restricted isometry constant of $\mathbf{\Psi} \mathbf{S} \tilde{\mathbf{U}}_{k,T}$. By  Theorem \ref{mainthm}, $\delta_{8s} < 0.4782$ can be guaranteed by replacing $s$ with $8s$, and taking $\delta = 0.15$ in \eqref{ripbound3}, which yields the results in our proposition. 
      
\end{proof}

\begin{remark}
    Note that since $\tilde{\mathbf{U}}_{k,T}$ contains orthonormal columns, we have $\|\mathbf{c}\|_2 = \|\tilde{\mathbf{U}}_{k,T} \mathbf{c}\|_2 = \|\pi_{\mathbf{A},T}(\mathbf{x})\|_2$. We denote $\tilde{\mathbf{U}}_{k,T}\mathbf{c}^n = \pi_{\mathbf{A},T}(\mathbf{x}^n)$, then   \Cref{thm:cosamp_stable} together with   \Cref{prop:embed} implies 
\begin{align*}
        f_T(\lambda_k)\|\mathbf{x} - \mathbf{x}^n\|_2 &\leq \frac{C_1}{\sqrt{s}} \sigma_s(\mathbf{c}) + D \|\mathbf{\Psi} \mathbf{e}\|_2 + 2 \rho^n f_T(\lambda_1)\|\mathbf{x}\|_2.
    \end{align*} 
    In particular, if $\mathbf{c}^\sharp$ is an accumulation point of $\mathbf{c}^n$, denote $\tilde{\mathbf{U}}_{k,T} \mathbf{c}^\sharp = \pi_{\mathbf{A},T}(\mathbf{x}^\sharp)$, then $\mathbf{x}^\sharp$ satisfies 
    \begin{align*}
        f_T(\lambda_k)\|\mathbf{x} - \mathbf{x}^\sharp\|_2 &\leq \frac{C_1}{\sqrt{s}}\sigma_s(\mathbf{c}) + D \|\mathbf{\Psi} \mathbf{e}\|_2.
    \end{align*}
\end{remark} In other words, when $\mathbf{y}$ is noisy and $\mathbf{c}$ is only approximately sparse, the recovery error on the initial signal $\mathbf{x}$ is bounded by a linear combination of the noise term and how much $\mathbf{c}$ differs from an $s$-sparse vector. Further if $\mathbf{e} = \mathbf{0}$ and $\mathbf{x}$ is exactly $s$-sparse, $\mathbf{x}^n \rightarrow \mathbf{x}$, which guarantees the uniqueness of recovery.

\subsection{Optimizing the sampling probability distribution}\label{sec:min_complexity}
In applications, we would like to minimize the sampling complexity to guarantee a RIP according to \Cref{thm:RIP}. Since we cannot control the bandwidth $k$ nor the sparsity level $s$ of real signals, we are left with minimizing $\nu_{k,T}$. The following proposition defines a set of optimal sampling distributions that minimize their respective coherence parameters  
\[\nu_{k,T}(t) = \max_{i\in [n],j\in [k]} \frac{|\tilde{\mathbf{U}}_{tn+i,j}|}{\sqrt{\mathbf{p}_t(i)}}\].

\begin{prop}\label{prop:optimal}
The following choice of probability distributions minimize their respective coherence parameters.  For $t = 0,1, \cdots, T-1$, $\nu_{k,T}(t)$ is minimized when \begin{align} \label{optprob}
\mathbf{p}_t(i) = \frac{|\tilde{\mathbf{U}}_{tn+i,\kappa_i}|^2}{\sum_{j=1}^n |\tilde{\mathbf{U}}_{tn+j,\kappa_j}|^2},\end{align}
where $\kappa_i = \arg\max\limits_{m \in [k]} |\tilde{\mathbf{U}}_{tn+i,m}|$ for $i \in[n]$. In this case, 

\begin{align}\label{optcoherence}
        \left(\nu_{k,T}(t)\right)^2 = \sum_{j=1}^n |\tilde{\mathbf{U}}_{tn+j,\kappa_j}|^2
\end{align}
       
\end{prop}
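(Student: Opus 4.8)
The plan is to treat the minimization of $\nu_{k,T}(t)$ for each fixed $t$ separately, since the coherence vector $\nu_{k,T}$ has independent components and the constraint (that $\mathbf{p}_t$ is a probability distribution) couples only the entries within a single time slice. Fix $t$, and for brevity write $a_i = \max_{m\in[k]} |\tilde{\mathbf{U}}_{tn+i,m}| = |\tilde{\mathbf{U}}_{tn+i,\kappa_i}|$ for $i \in [n]$, so that by the last identity in Definition~\ref{def:coherence} we have
\[
\nu_{k,T}(t)^2 = \max_{i\in[n],\,j\in[k]} \frac{|\tilde{\mathbf{U}}_{tn+i,j}|^2}{\mathbf{p}_t(i)} = \max_{i\in[n]} \frac{a_i^2}{\mathbf{p}_t(i)}.
\]
So the optimization problem reduces to: minimize $\max_{i\in[n]} a_i^2/\mathbf{p}_t(i)$ over all $\mathbf{p}_t$ in the probability simplex with strictly positive entries.

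The main step is a standard balancing / water-filling argument. First I would establish the lower bound: for any valid $\mathbf{p}_t$, let $\mu = \max_i a_i^2/\mathbf{p}_t(i)$, so that $\mathbf{p}_t(i) \le a_i^2/\mu$ for every $i$; summing over $i$ and using $\sum_i \mathbf{p}_t(i) = 1$ gives $1 \le \frac{1}{\mu}\sum_{i=1}^n a_i^2$, hence $\mu \ge \sum_{i=1}^n a_i^2$. This shows $\nu_{k,T}(t)^2 \ge \sum_{j=1}^n a_j^2 = \sum_{j=1}^n |\tilde{\mathbf{U}}_{tn+j,\kappa_j}|^2$ for every choice of distribution. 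Next I would verify that the proposed distribution \eqref{optprob}, namely $\mathbf{p}_t(i) = a_i^2 / \sum_{j=1}^n a_j^2$, attains this bound: plugging in, $a_i^2/\mathbf{p}_t(i) = \sum_{j=1}^n a_j^2$ for every $i$, so the maximum over $i$ equals $\sum_{j=1}^n a_j^2$, matching the lower bound. This gives both the optimality claim and the formula \eqref{optcoherence}. I should also note the mild nondegeneracy point: each $a_i > 0$ because the rows of $\tilde{\mathbf{U}}_{k,T}$ are scaled copies of the rows of an eigenvector matrix and (on a connected graph with simple eigenvalues, as assumed) no row of $\mathbf{U}$ can be identically zero among its first $k$ entries — actually one should check $a_i>0$ for each $i$, which follows since the columns $\mathbf{u}_1,\dots,\mathbf{u}_k$ span a $k$-dimensional space and, more simply, $\mathbf{u}_1$ is the constant vector so every entry of the first column block is nonzero; hence $\mathbf{p}_t$ is a genuine nondegenerate distribution, consistent with the standing assumption in Definition~\ref{def:probs}.

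The argument is essentially a one-line convex duality / Cauchy–Schwarz-flavoured estimate, so I do not anticipate a genuine obstacle; the only thing to be careful about is the reduction from the $\ell_\infty$-over-$(i,j)$ formulation to the $\max_i a_i^2/\mathbf{p}_t(i)$ formulation, which is exactly the chain of identities already recorded in Definition~\ref{def:coherence}, and the observation that the optimal $\mathbf{p}_t$ makes the ratio $a_i^2/\mathbf{p}_t(i)$ constant in $i$ (the hallmark of an optimal minimax allocation). If one wanted to be fully rigorous about existence of the minimizer one could note the objective is continuous on the compact simplex after the harmless observation that it suffices to optimize over distributions supported where $a_i>0$, but since we exhibit the optimizer explicitly and match it with the lower bound, existence is automatic and no compactness argument is really needed.
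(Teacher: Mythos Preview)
Your approach is correct and in the same spirit as the paper's: both are elementary verifications that the distribution \eqref{optprob} equalizes the ratios $a_i^2/\mathbf{p}_t(i)$ and hence minimizes their maximum. One slip to fix: from $\mu=\max_i a_i^2/\mathbf{p}_t(i)$ you get $\mathbf{p}_t(i)\ge a_i^2/\mu$ (not $\le$), so summing gives $1\ge\frac{1}{\mu}\sum_i a_i^2$ and thus $\mu\ge\sum_i a_i^2$; your final conclusion is right but the two intermediate inequalities are reversed. The paper's argument is organized slightly differently---instead of a lower bound plus achievability, it argues by direct comparison: if $\mathbf{q}\ne\mathbf{p}_t$ then some coordinate satisfies $\mathbf{q}(\ell)<\mathbf{p}_t(\ell)$, and at that coordinate the ratio $a_\ell^2/\mathbf{q}(\ell)$ strictly exceeds $\sum_j a_j^2$. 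That version also immediately gives uniqueness of the minimizer, which your summing argument yields only after the extra observation that equality forces all ratios to coincide.
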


\begin{proof}[Proof of   \Cref{prop:optimal}]\label{prf:5}
  The formula of \eqref{optcoherence} follows from direct calculations. Now we prove the optimality of \eqref{optprob}. 
   Suppose $\mathbf{q}$ is a probability distribution not equal to $\mathbf{p}_t$, then there exists some index $\ell$ for which $\mathbf{q}(\ell) < \mathbf{p}_t(\ell)$. As a result, 
    \begin{align*}
        \max\limits_{i \in [n],j\in [k]}  \frac{|\tilde{\mathbf{U}}_{tn+i,j}|^2}{\mathbf{q}(i)} &\geq \max\limits_{j \in [k]} \frac{|\tilde{\mathbf{U}}_{tn+\ell,j}|^2}{\mathbf{q}(\ell)} 
        > \max\limits_{j\in [k]} \frac{|\tilde{\mathbf{U}}_{tn+\ell,j}|^2}{\mathbf{p}_t(\ell)} 
       =\frac{|\tilde{\mathbf{U}}_{tn+\ell,\kappa_{\ell}}|^2}{\mathbf{p}_t(\ell)} =  \sum_{j=1}^n |\tilde{\mathbf{U}}_{tn+j,\kappa_j}|^2.
    \end{align*}
\end{proof}

We will henceforth refer to the probability distributions introduced in \Cref{prop:optimal} as optimal distributions. Next we analyze this incoherence parameter for some graphs. 

\begin{itemize}
\item \textbf{Circulant graphs with periodic boundary conditions}. In this case, their adjacent matrices are real and symmetric circulant matrices.  The eigenvalues will have multiplicities but we choose  the eigenvectors of their Laplacian to be the $n$ dimensional classical Fourier modes. For simplicity, assume that the top $k$ eigenvalues are $1=\lambda_1\geq \lambda_2\geq \cdots,\lambda_k>0$.  Then,   with fixed time instance $t$ and fixed column index $j$, $| \widetilde{\mathbf{U}}_{tn+i,j}|$ is the same for all node $i$. Therefore $\kappa_i$ is the same for all $i$, and
$\kappa_i \equiv \mathrm{argmax}_{j\in k} \frac{\lambda_j^{2t}}{f_{T}^2(\lambda_j)}=j_0$. This indicates that the optimal distribution $\mathbf{p}_t$ is the uniform distribution {for every $t=0,\cdots,T-1$}.  In fact, we have 
\[
\left(\nu_{k,T}(t)\right)^2 = 
\begin{cases} 
\sum_{j=1}^n |\tilde{\mathbf{U}}_{tn+j,j_0}|^2 = \frac{\lambda_{j_0}^{2t}(1-\lambda_{j_0}^2)}{1-\lambda_{j_0}^{2T}} & \text{if } \lambda_{j_0} \neq 1, \\
\sum_{j=1}^n |\tilde{\mathbf{U}}_{tn+j,j_0}|^2  = \frac{1}{T} & \text{if } \lambda_{j_0} = 1.
\end{cases}
\]

        so that 
        $1 \leq \sum_{t=0}^{T-1}\nu_{k,T}^2(t) < T $. Note that if $t=0$, then {$\nu_{k,T}^2(t)= \frac{1}{f_T^2(\lambda_{j_0})}$}.  We note that in slow diffusion regime where all $\lambda_i$ are close to 1, one is expected to get $\nu_{k,T}^2(t)\approx \frac{1}{T}$, so $\sum_{t=0}^{T-1}\nu_{k,T}^2(t) \approx 1$. In fast diffusion regime, we have only a few eigenvalues equal or close to 1, and all other eigenvalues close to 0, in this case,  one can also get $\nu_{k,T}^2(t)\approx \frac{1}{T}$. For intermediate regimes, we do not have closed formulas and we will explore this in the numerical section. One can refer the results in  Figure \ref{tab:coh_r} and \ref{tab:coh_m}, where we see that in all cases, $\sum_{t=0}^{T-1}\nu_{k,T}^2(t)$ are $o(1)$ and do not scale with $T$. 

  \item \textbf{The community graph made of $k$ disconnected components of size $n_1,\cdots, n_k$, denoted by $\mathcal{C}_1,\cdots,\mathcal{C}_k$, such that $\sum_{i=1}^{k}n_i=n$}.  Let the Laplacian $\mathbf{L}$ be the  graph Laplacian with all edge weights  equal to one. Then a basis of $\mathrm{span}(\mathbf{U}_k)$ is the concatenation of the square root of the degree operator multiplied by the indicator vectors of each connected component. Moreover, $\mathrm{span}(\mathbf{U}_k)$  is the eigenspace associated with the eigenvalue 0 of $\mathbf{L}$. As a result, the heat diffusion process over this eigenspace is the trivial identity. So the temporal dynamics plays no role in sampling and it will be the same with the static case.  
  
  By calculation, for  node $i$ in component $j$, we have $\|\widetilde{\mathbf{U}}_{tn+i,j'} \|_{\infty} = 0$ for $j'\neq j$ and $\|\widetilde{\mathbf{U}}_{tn+i,j} \|_{\infty} $ is the same for all $t$ with
\begin{equation*} 
\|\widetilde{\mathbf{U}}_{tn+i,j} \|_{\infty} =\frac{\sqrt{d_{ij}}}{\sqrt{d_jT}}, 
\end{equation*}where $d_{ij}$ is the degree of  node $i$ in component $\mathcal{C}_j$ and $d_j$ is the total degree of nodes in component $j$.  For simplicity, let us assume that  each component is regular, i.e., each node has the same degree. In this case,  for $i\in \mathcal{C}_j$, $\kappa_i=j$ and therefore the optimal probability of choosing the spatial node $i$  in $\mathcal{C}_j$ at time $t$ is $\frac{1}{kn_j}$. Thus, if all components have the same size, then uniform sampling is the optimal sampling. However, if components differ in size, smaller components should have a higher sampling probability.   We can see that optimal sampling requires that each component is sampled with probability $\frac{1}{k}$, independent of its size. The probability that each component is sampled at least once, a necessary condition for perfect recovery, is thus higher than that using uniform sampling. In the numerical section (Section \ref{sec:experiments}), we consider the loosely defined
community-structured graph, which is a  relaxation of the strictly  disconnected component example. In this case, one also expects that the optimal $\mathbf{p}_t$ to sample a node is inversely proportional to the size of its community.
\end{itemize}


\section{Extensions}\label{sec:generalize}

We have primarily focused on the linear heat diffusion process as a linear dynamical system with the core assumption that the graph signals are bandlimited and sparse in the graph frequency domain. Our sampling theory framework is also applicable more generally.

\begin{enumerate}[label=(\roman*),leftmargin=.25in]
    \item \textbf{More General Evolution Operators:}
    \begin{enumerate}[label=(\alph*)]
        \item \textbf{Functions of Laplacian operators:} The evolution operator can be a continuous function of the graph Laplacian: Let $h$ be a continuous function defined on a compact interval containing the spectrum of the graph Laplacian, $\mathbf{A}=h(\mathbf{L})$. Such operators are diagonalizable and shares the same eigenvectors with the graph Laplacian. This setup enables frequency-preserving dynamics, where the graph signal evolves within the eigenspace spanned by the top $k$ eigenvectors $\{\mathbf{u}_i\}_{i=1}^{k}$ of the graph Laplacian $\mathbf{L}$.

        \item \textbf{Orthogonality-Preserving Operators:} We can also consider operators $\mathbf{A}$ that preserve orthogonality relations between $\{\mathbf{u}_i\}_{i=1}^k$ over time. For example, if $\mathbf{A} \in \{c \cdot \mathbf{M} : \mathbf{M} \in O(n)\}$, where $O(n)$ denotes the orthogonal group, then $\tilde{\mathbf{U}}_{k,T}$ can be constructed as a column-wise normalization of the matrix:
        \[
        \begin{bmatrix}
            \mathbf{u}_1 & \mathbf{u}_2 & \cdots & \mathbf{u}_k \\
            \mathbf{A} \mathbf{u}_1 & \mathbf{A} \mathbf{u}_2 & \cdots & \mathbf{A} \mathbf{u}_k \\
            \vdots & \vdots & & \vdots \\
            \mathbf{A}^{T-1} \mathbf{u}_1 & \mathbf{A}^{T-1} \mathbf{u}_2 & \cdots & \mathbf{A}^{T-1} \mathbf{u}_k
        \end{bmatrix}.
        \]
    \end{enumerate}
 \item \textbf{Irregular observation time instances.} In our work, we assumed regular observation time instances. However, our framework can be extended to handle sampling at arbitrary time instances \( t_0 < t_1 < \cdots < t_l = T \), leveraging the fact that the diffusion matrices at different time points can be diagonalizable simultaneously.

    \item \textbf{From Linear to Affine Linear Dynamical Systems:} Consider discrete affine dynamical systems governed by $\mathbf{x}_{t+1} = \mathbf{A} \mathbf{x}_t + \mathbf{b}$. In this setting, we construct $\tilde{\mathbf{U}}_{k,T}$ from $A$ alone, disregarding the $\mathbf{b}$ term. If $\mathbf{b}$ is known, during the sampling process, we can manually eliminate the influence of $\mathbf{b}$, represented by:
    \[
    \begin{bmatrix}
        0 & \mathbf{b} & \mathbf{A} \mathbf{b} + \mathbf{b} & \cdots & \mathbf{A}^{T-2} \mathbf{b} + \cdots + \mathbf{b}
    \end{bmatrix}^\top.
    \]
    Alternatively, if $\mathbf{I} - \mathbf{A}$ is invertible, we can apply a coordinate shift $\mathbf{x} \mapsto \mathbf{x} + (\mathbf{I} -\mathbf{ A})^{-1}\mathbf{b}$, so the shifted signals evolve solely based on powers of $\mathbf{A}$.
\end{enumerate}

\section{Experiments}\label{sec:experiments}

In this section, we empirically investigate two aspects of the recovery problem: (i) verifying the sampling complexity provided by our sampling theorems in both noise-free and noisy regime; (ii) studying the behavior of dynamical graph coherence for ring and Minnesota graphs. The structure of this section is organized as follows:

\begin{enumerate}[label=(\roman*),leftmargin=.25in]
    \item In \Cref{sec:exp_coh}, we examine the behavior of $\left(\nu_{k, T}\right)^2$ across different regimes and sampling distributions applied to both the ring graph and the Minnesota graph (refer to \Cref{fig:graphs}).
    \item In \Cref{sec:noiseless}, we confirm that the sampling complexity required for recovery in noiseless settings is proportional to $s$, using synthetic data.
    \item In \Cref{sec:noisyrecovery}, we test the robustness of algorithms described in \Cref{thm:cosamp_stable}. 
\end{enumerate}

  We use the CoSaMP algorithm \cite{cosamp} with 20 iterations for all recovery tests. For the experiments in  \Cref{sec:exp_coh,sec:noiseless}, we consider the Minnesota graph and the unweighted ring graph each with $n = 2642$ from the GSP Toolbox \cite{perraudin2014gspbox}. A visualization of the two graphs is provided in \Cref{fig:graphs}.

\begin{figure}[ht]
    \centering
    \includegraphics[width=0.4\linewidth]{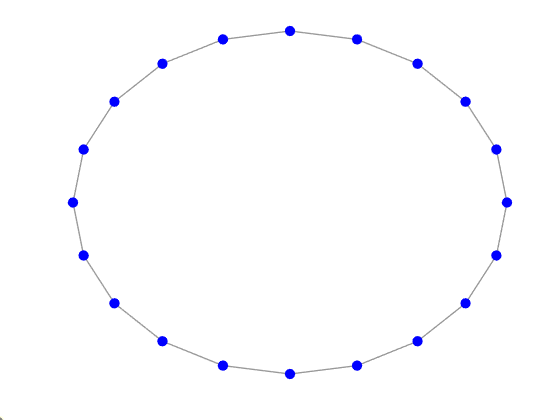}
    \includegraphics[width=0.4\linewidth]{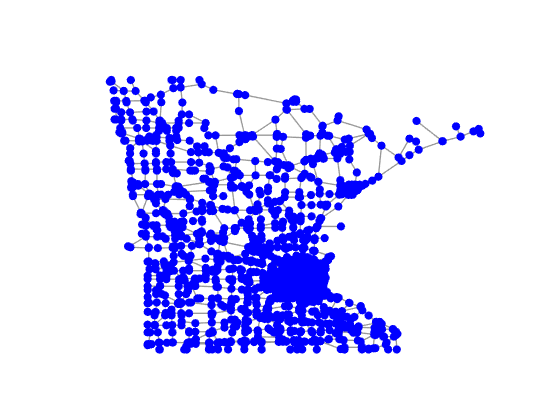}
    \caption{\footnotesize A downsized ring graph (left) and the Minnesota graph (right).}
    \label{fig:graphs}
\end{figure}

\subsection{Coherence Parameter $\left(\nu_{k,T}\right)^2$}\label{sec:exp_coh}
For \( t \in \{0\}\cup[T-1] \), we wish to study the value \( \left(\nu_{k, T}(t)\right)^2 \) as a function of \( k \) and \( T \) for the optimal and uniform sampling distributions on the ring and Minnesota graphs, as shown in \Cref{fig:graphs}. These two graphs represent instances of regular and irregular graphs, respectively. The value \(\nu_{k, T}(t)\) represents the spatio-temporal trade-off ratio at a single time \( t \). The sum \(\sum_{t=0}^{T-1} \left(\nu_{k, T}(t)\right)^2\) represents the spatio-temporal trade-off ratio over all time instances, as the former characterizes the sample complexity needed at time \( t \), while the latter characterizes the total sample complexity needed over all time instances.

On both graphs,  we consider $T = 1:1:20$ and $k= 100:100:1000$ to compute the uniform and optimal coherence values. Particularly, the results for the case of  $T=10$ and $k=1000$ are shown in \Cref{tab:coh_r}. The general results are shown in  \Cref{tab:coh_m}.

\begin{figure}
    \centering
    \includegraphics[width=0.8\linewidth]{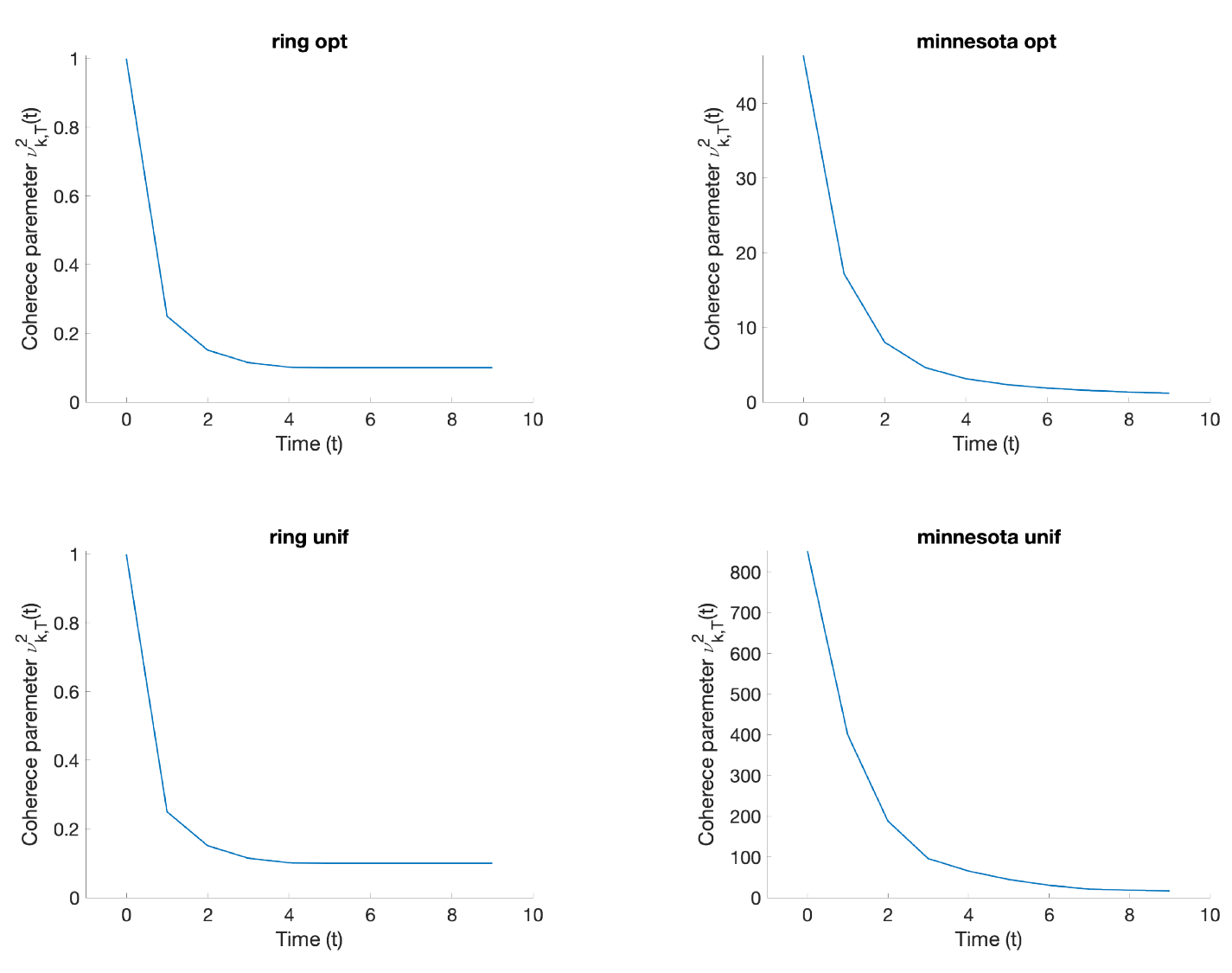}
    \caption{$\nu^2_{k,T}(t)$ plot with fixed $T=10$  and $k=1000$  for ring graph (left) and Minnesota graph (right). Top is for optimal sampling and bottom is for uniform sampling.}
    \label{tab:coh_r}
\end{figure}

\begin{figure}
    \centering
    \includegraphics[width=\linewidth]{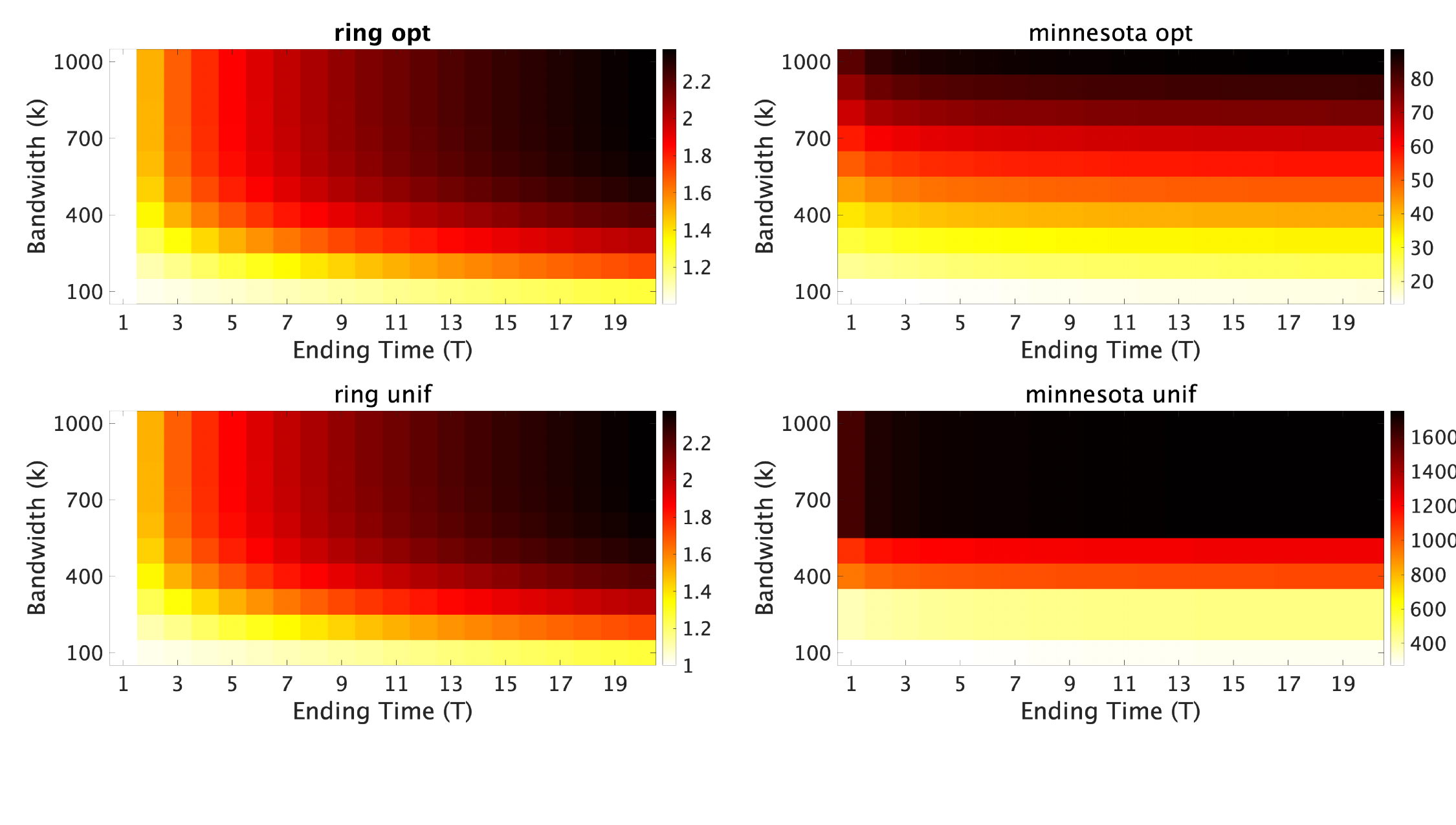}
    \caption{$\sum_{t=0}^{T-1}\nu^2_{k,T}(t)$ plot with various $T=1:1:20$ and $k=100:100:1000$ for ring graph (left) and minnesota graph(right). Top is for optimal sampling and bottom is for uniform sampling.}
    \label{tab:coh_m}
\end{figure}

In \Cref{tab:coh_r}, the coherence values of both optimal and uniform sampling distributions for the ring graph remain approximately the same at each time step. This can be seen from our formula  in   Equation \eqref{optcoherence} and arguments in the remark of  \Cref{prop:optimal}.  For the Minnesota graph, unlike the ring case, the coherence values differ significantly between the two sampling methods.  However, the trends of $\nu^2_{k,T}(t)$ on $t$ under both distributions on both graphs indicate a greater proportion of sampling in the earlier time steps and a lower proportion in the later time steps. This is reasonable as signals at earlier times are informative than signals at later times which are very smooth due to the diffusion process. 

{In \Cref{tab:coh_m}, we compute the sum of the coherence parameters at each time step for various $(T,k)$. Darker colors indicate higher values, suggesting a greater number of observed space-time samples needed. Following the fixed $(T,k)$ experiment, the sum at each $(T,k)$ remains very close for both distributions on the ring graph. Additionally, for ring graph with $T= 1$ on plot, they are the same, which aligns with our theoretical analysis. Moreover, with increasing bandwidth and a larger time span, more observed samples are required. This makes sense since higher bandwidth and larger time span bring more complexity. 

On Minnesota graph, $\sum_{t=0}^{T-1}\nu^2_{k,T}(t)$ slightly depends on the time length and is mainly influenced by the signal bandwidth, which suggests a similar level of signal complexity over time on irregular graph. Unlike the ring graph, the coherence values for the uniform distribution are not only much higher than those of the optimal distribution but also change more dramatically than those of the optimal distribution when bandwidth varies, which makes optimal sampling distribution much needed in such scenarios. }

From the experiments, we observe that the gap in incoherence between uniform distributions and optimal distributions for regular graphs is smaller compared to irregular graphs. This can be explained by our theoretical derivation of coherence formulas: the eigenvectors for regular graphs have uniform magnitudes, making the optimal distribution closer to the uniform distribution. Additionally, our constructions of optimal sampling distributions take graph topology into account and the resulting sampling locations would yield sub-optimal efficiency for both irregular and regular graphs.

\subsection{Noiseness Data Recovery}\label{sec:noiseless}

We now apply our sampling and recovery methods on synthetically generated graph signals on the ring and Minnesota graphs. We wish to study the success rate of recovery as the number of samples   $m$ and the sparsity $s$ of the frequency domain vary. The  noiseless set-up of this section is to verify the sampling complexity of our recovery problem stated in  \Cref{mainthm}.

 Both graphs have 2642 nodes, generated using the GSP toolbox with $\{0,1\}$ valued adjacent matrix. We take {$k=1000$}, $m =$ 10:10:1000, $s =$ 1:50 and $T=1,10$.  For the ring graph, we pick the operator $\mathbf{A} =\exp(-4\mathbf{L})$, while for the Minnesota graph, we choose $\mathbf{A}=\exp(-0.5\mathbf{L})$ with $\mathbf{L}$ being the graph laplacian. This corresponds two scenarios where we have fast diffusion and slow diffusion.

    For each pair $(m,s)$, we randomly generate $100$ normalized $s$-sparse $k$-bandlimited signals. A recovery is considered successful if the relative error between the estimated and ground truth signal is within 1\%. We record the number of successful recoveries. 

In our simulation,   we generated $m_t$ from $m$ for each $t \in \{0\}\cup[T-1]$ when $T>1$ as follows. For both distributions, we determined the number of samples $m_t$ based on the coherence value, $m_t = \frac{\nu_{k,T}^2(t)}{\sum_{t=0}^{T-1}\nu_{k,T}^2(t)}m$, and then  $m_t$ samples are randomly taken from $n$ nodes according to the respective probability distribution $\mathbf{p}_t$. It is easy to see that, for uniform distributions, each time step, we uniformly choose $m_t =m/T$ samples while the optimal distributions will sample adaptively according to the graph topology and dynamics.  

The results are shown in \Cref{fig:rec_r,fig:rec_m}, where red, white, and blue indicate success rates of 0\%, 50\%, and 100\%, respectively. 

\begin{figure}
    \centering
    \begin{tabularx}{0.9\linewidth}{*{2}{X}}
    \end{tabularx}
    \includegraphics[width=0.9\linewidth]{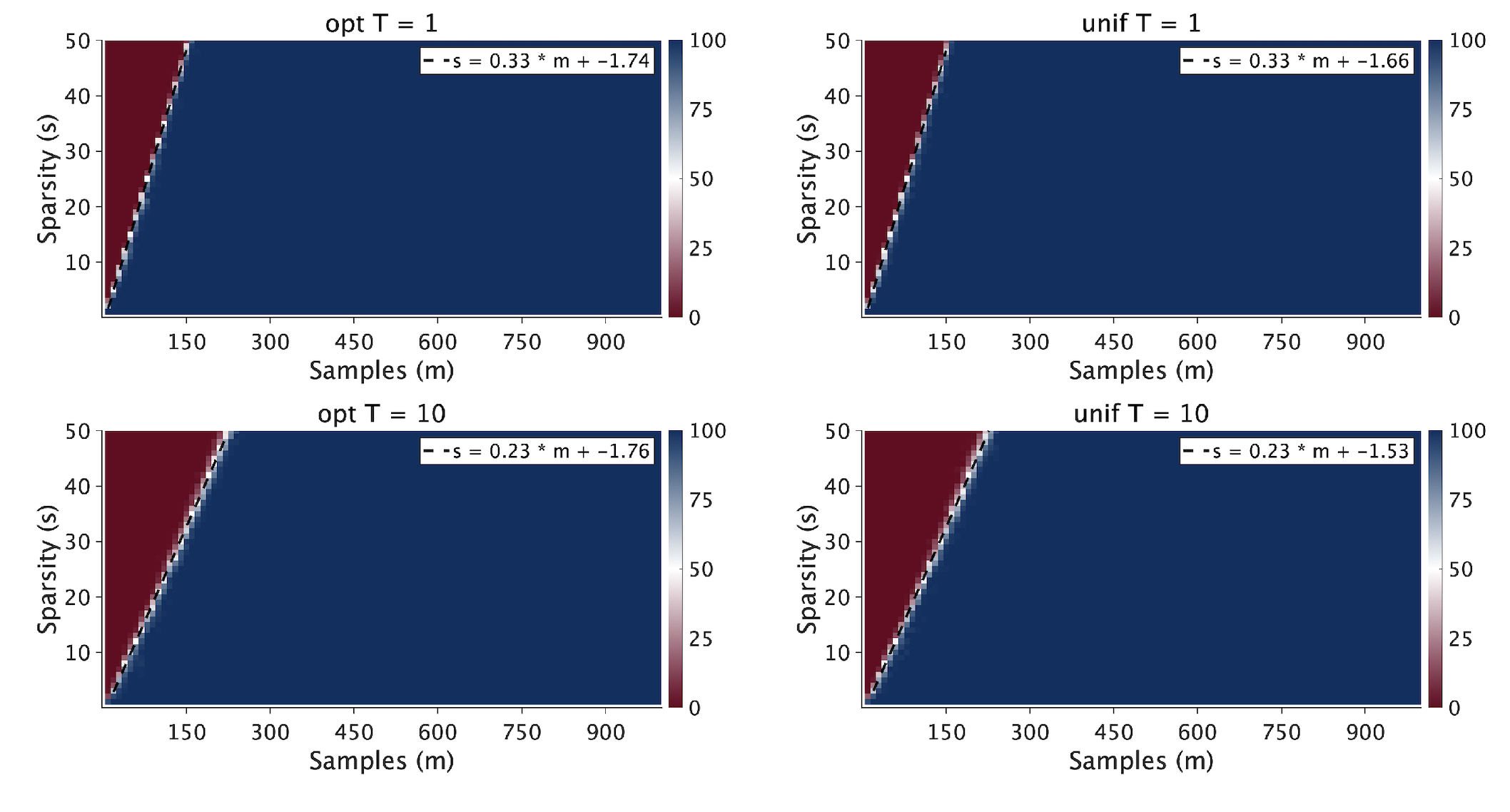}
    \caption{Phase transition diagram for the ring graph (top: $T = 1$; bottom: $T = 10$) using optimal (left) and uniform distributions (right). The slopes of boundaries from red to blue are fitted using black dot lines, indicating the critical sampling complexities that ensure  recovery with high probability. }
    \label{fig:rec_r}
\end{figure}

\begin{figure}
    \centering
    \includegraphics[width=0.9\linewidth]{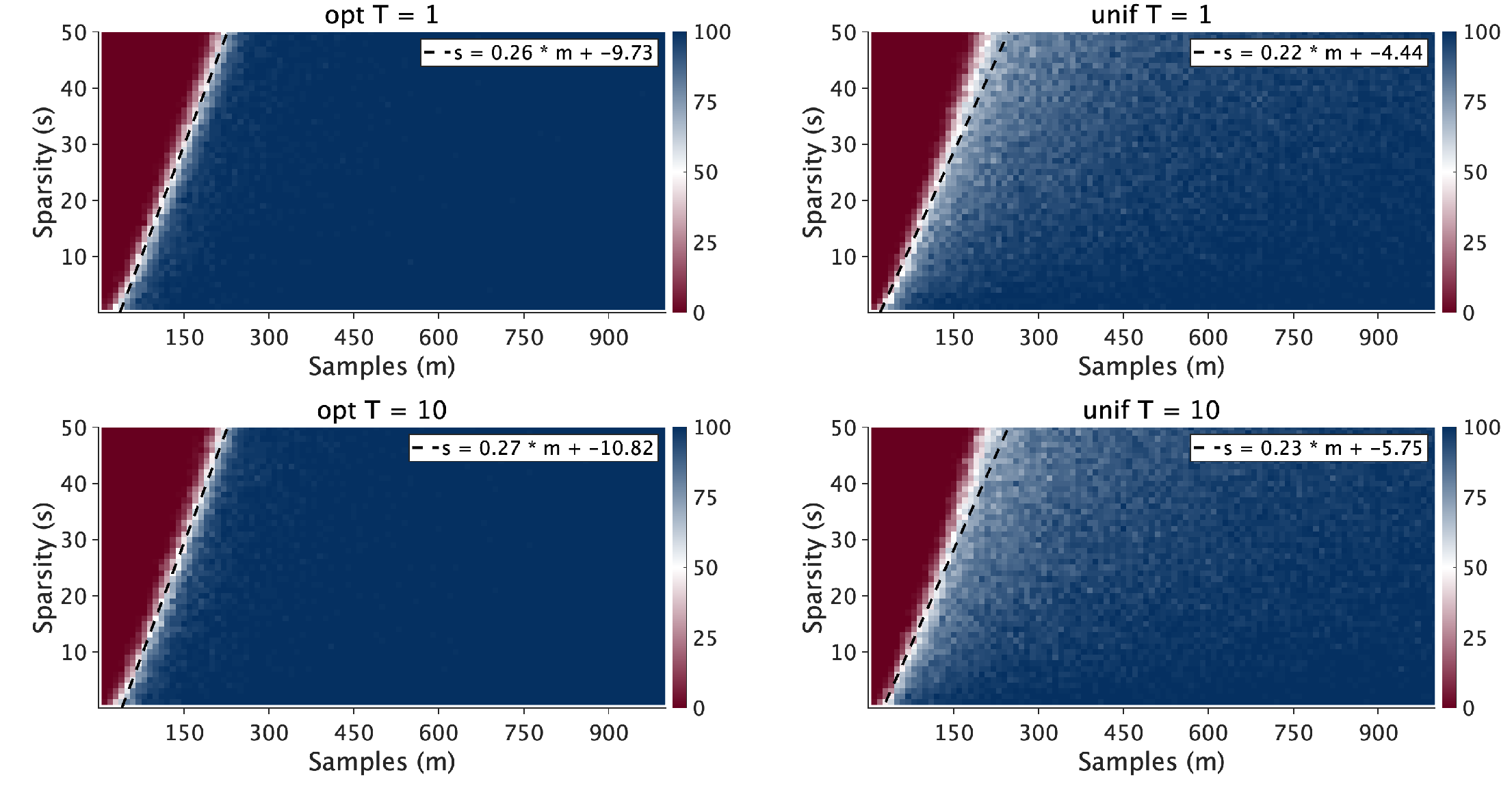}
    \caption{Phase transition diagram for the Minnesota graph (top: $T = 1$; bottom: $T = 10$) using optimal (left) and uniform distribution (right). The slopes of boundaries from red to blue are fitted using black dot lines, indicating the critical sampling complexities that ensure  recovery with high probability. }
    \label{fig:rec_m}
\end{figure}
\Cref{fig:rec_r} shows the results for ring graph with $T=1$ and $T=10$. The phase diagrams are approximately the same for both uniform and optimal sampling distributions. This aligns with our discussion in \Cref{sec:exp_coh} regarding the ring graph, where we demonstrated that the coherence values {were the same} between the two distributions.  
In \Cref{fig:rec_m}, for the Minnesota graph, uniform sampling performs  worse than optimal sampling, requiring a greater number of samples for recovery. This is consistent with the coherence values shown in \Cref{tab:coh_m}, where the uniform distribution exhibits a  higher coherence value, thereby suggesting a higher number of samples for recovery according to \eqref{ripbound3}.


In both the ring graph and Minnesota graph, we observe that under the optimal sampling distribution, the phase transition can be linearized with respect to $s$. This is consistent with the claims of \Cref{thm:RIP}, in which the number of samples required for RIP is proportional to $s$. Further, under the optimal distribution, we can overcome the added complexity and irregularity of the Minnesota graph and have comparable recovery results to the ring graph.

\subsection{Noisy Data Recovery}\label{sec:noisyrecovery}

In this section, we evaluate the performance of our recovery algorithm in the presence of noise, modeled as:

\[
\mathbf{y} = \mathbf{S}(\pi_{\mathbf{A},T}(\mathbf{x}))+ \mathbf{e},
\] where \( \mathbf{e} \sim \mathrm{Unif}([-\sigma,\sigma]^{nT})\). 

In contrast to the noise-free case, where recovery was evaluated for sparsity levels \( s = 1:1:50 \) with \( k=1000 \), we now consider a reduced range of sparsity levels \( s = 1:1:10 \), while keeping the sampling budget constant across both settings. This adjustment is necessary because in the noisy case, stable recovery requires more samples to compensate for the effect of noise. As shown in Proposition \ref{thm:cosamp_stable}, a sufficient recovery condition in the noisy scenario is guaranteed when the Restricted Isometry Property (RIP) constant satisfies \( \delta_{8s} < 0.4782 \), compared to \( \delta_{s} < 0.4782 \) in the noise-free case, which implies a higher sampling complexity might be needed. This is confirmed by our numerical experiments later. 

For each sparsity level \( s \), we randomly generate sparse signals \( \mathbf{x} \)  and its noisy  version ($\sigma =10^{-3}$), and then test recovery with a sampling budget of \( m = 100:100:1000 \), conducting 100 random trials using both uniform and optimal sampling distributions. As suggested by Proposition \ref{thm:cosamp_stable}, we define:

\[
\text{error ratio} =\frac{\|\mathbf{x} - \hat{\mathbf{x}}\|_2}{\|\mathbf{\Psi} \mathbf{e}\|_2\|\mathbf{x}\|_2 }.
\]

In the experiments, we calculate the mean of the middle 90\% of the error metrics, which falls between the 5th and 95th percentile.

\begin{figure}[H]
    \centering
    \begin{tabularx}{0.9\linewidth}{*{2}{X}}
    \end{tabularx}
    \includegraphics[width=0.8\linewidth]{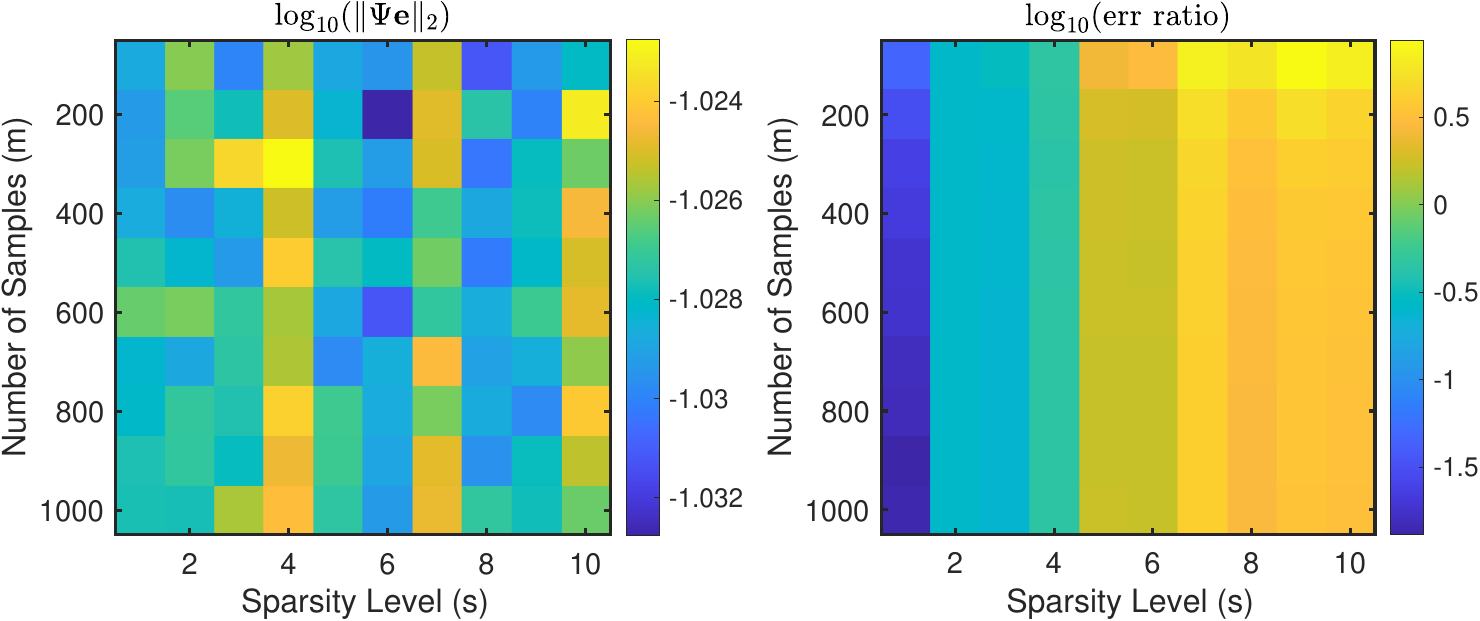}
    \caption{Error plots for ring graph with $T=10$ using uniform (optimal) distribution. The noise perturbation $\|\mathbf{\Psi} \mathbf{e}\|_2$ is centred around $ 10^{-1}$. The best achievable best errors range falls into  $[10^{-2.9}, 10^{-0.5}]$ as $s$ increases from 1 to 10.}
    \label{fig:rec_r_noisy}
\end{figure}

\begin{figure}[H]
    \centering
    \begin{tabularx}{0.9\linewidth}{*{2}{X}}
    \end{tabularx}
    \includegraphics[width=0.8\linewidth]{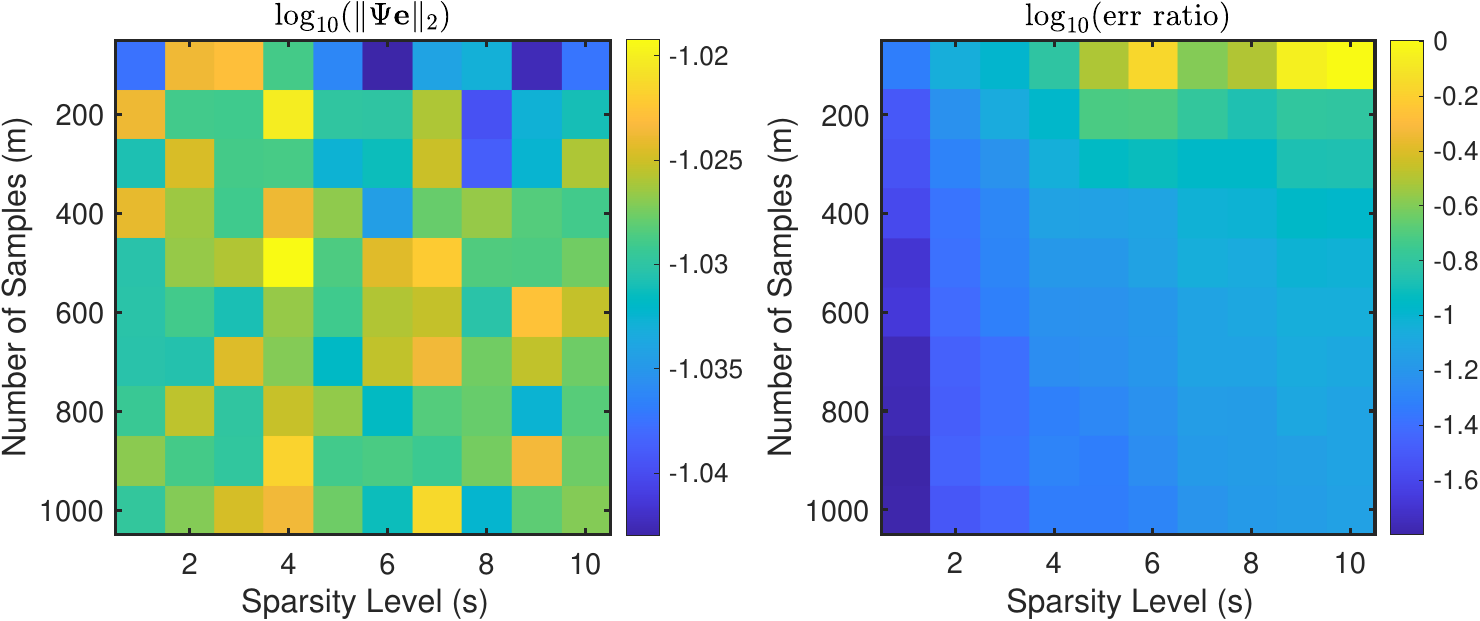}
     \includegraphics[width=0.8\linewidth]{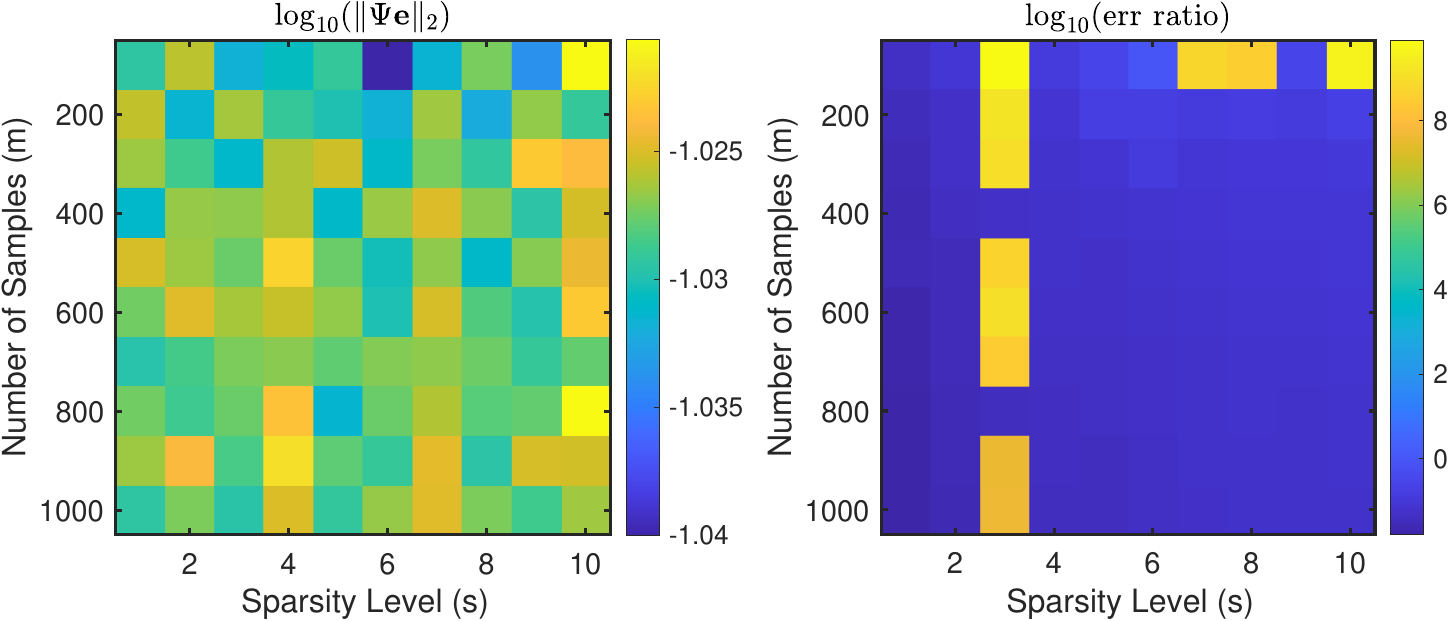}
    \caption{Error plots for Minnesota graph with $T=10$ using optimal distribution (top) and uniform distribution (bottom). The noise perturbation $\|\mathbf{\Psi} \mathbf{e}\|_2$ is centred $10^{-1}$. The best achievable best errors range falls into  $[10^{-2.9}, 10^{-2.13}]$ as $s$ increases from 1 to 10 for optimal distribution. The uniform distribution had similar recovery accuracy but with failed recovery for the sparsity $s=3$. }
    \label{fig:rec_r_noisy}
\end{figure}

We summarize our findings as follows:

\begin{itemize}

\item The error ratio increases as \(s\) increases, as predicted by Proposition \ref{thm:cosamp_stable}, since the constant \(D\) depends on \(\delta_{8s}\), which grows with \(s\).

\item For each sparsity level, there is a limit to recovery accuracy, achievable with a relatively small number of samples. In both examples, we observe that good recovery accuracy is attained with \(m = 300\) samples, and further increasing the number of samples does not significantly improve accuracy.

\item Optimal sampling distributions exhibit more stable behavior compared to uniform distributions. While uniform sampling may fail to accurately estimate some sparse signals, optimal sampling accounts for all sparse signals and results in more uniform recovery performance.

\item The robustness can be further enhanced by employing alternative $\ell_1$ 
 -minimization techniques, such as proximal gradient methods and ADMM. We leave this exploration for future work.

\end{itemize}

\FloatBarrier

\begin{appendices}
\section{Proofs and Supplementary Materials}\label{sec:appendix}
\begin{theorem}\label{thm:RIP}
Given $\eta, \beta, \epsilon \in (0,1)$, the sampling operator $\mathbf{\Psi} \mathbf{S} \tilde{\mathbf{U}}_{k,T}$ possesses a restricted isometry constant $\delta_s$ that is bounded by $\eta+\eta^2+\beta$. This is achieved with a probability of at least $1 - \epsilon$ under the following conditions: for every iteration $t = 0, 1, \ldots, T-1$,
\begin{align}
m_t\geq& \frac{32s(\nu_{k,t})^2}{3\beta^2}\log(\epsilon^{-1})\label{ripbound1}\\
m_t\geq& \frac{6272s(\nu_{k,t})^2 \log(3nT)\log(4k)\log^2(4s)}{\eta^2}\label{ripbound2}.
\end{align}  where $m_t$ is the number of samples taken at time $t$.
\end{theorem}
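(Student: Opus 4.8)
The plan is to adapt the by-now-standard two-step analysis of the restricted isometry property for bounded orthonormal systems (see \cite{Foucart2013-sv}) to the structured space--time sampling operator at hand. First I would express $\mathbf{\Psi}\mathbf{S}\tilde{\mathbf{U}}_{k,T}$ as a matrix with $M=\sum_{t=0}^{T-1}m_t$ independent rows: writing $\tilde{\mathbf{U}}_{\tau_t}=\widetilde{\mathbf{U}}_{k,T}(tn+1:(t+1)n,:)\in\mathbb{R}^{n\times k}$ for the $t$-th time block, a sample drawn at time $t$ at node $\omega\sim\mathbf{p}_t$ contributes the row $X=(m_t\,\mathbf{p}_t(\omega))^{-1/2}\,\tilde{\mathbf{U}}_{\tau_t}^\top\delta_{\omega}\in\mathbb{R}^k$. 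Because $\tilde{\mathbf{U}}_{k,T}$ has orthonormal columns and $\sum_{t=0}^{T-1}\tilde{\mathbf{U}}_{\tau_t}^\top\tilde{\mathbf{U}}_{\tau_t}=\tilde{\mathbf{U}}_{k,T}^\top\tilde{\mathbf{U}}_{k,T}=\mathbf{I}_k$, a direct computation shows that $\mathbb{E}\bigl[(\mathbf{\Psi}\mathbf{S}\tilde{\mathbf{U}}_{k,T})^\top(\mathbf{\Psi}\mathbf{S}\tilde{\mathbf{U}}_{k,T})\bigr]=\mathbf{I}_k$. Hence $\delta_s=\sup_{\mathcal{S}}\bigl\|\mathbf{H}_{\mathcal{S}}\bigr\|_{2\to 2}$, where $\mathbf{H}=(\mathbf{\Psi}\mathbf{S}\tilde{\mathbf{U}}_{k,T})^\top(\mathbf{\Psi}\mathbf{S}\tilde{\mathbf{U}}_{k,T})-\mathbf{I}_k$ and $\mathcal{S}$ ranges over $s$-subsets of $[k]$; equivalently $\delta_s=\sup_{\mathbf{z}\in D_s}\bigl|\sum_{\ell=1}^M\bigl(\langle X_\ell,\mathbf{z}\rangle^2-\mathbb{E}\langle X_\ell,\mathbf{z}\rangle^2\bigr)\bigr|$, where $D_s$ is the set of unit-norm $s$-sparse vectors in $\mathbb{R}^k$. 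The quantitative input that drives the whole argument is the pointwise bound $\|X\|_\infty\le\nu_{k,t}/\sqrt{m_t}$ for a row produced at time $t$, which is precisely the normalization defining $\nu_{k,t}$ in \Cref{def:coherence}; it implies $|\langle X_\ell,\mathbf{z}\rangle|\le\sqrt{s}\,\|X_\ell\|_\infty\le\sqrt{s}\,\nu_{k,t}/\sqrt{m_t}$ for $\mathbf{z}\in D_s$.

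Second, I would bound $\mathbb{E}\,\delta_s$ by a chaining (Dudley) argument for the supremum defining $\delta_s$. After Rademacher symmetrization one applies Dudley's inequality with covering numbers of $D_s$ measured both in $\ell_2$ and in the data-dependent seminorm induced by $\max_{\ell}|\langle X_\ell,\cdot\rangle|$; the latter is where the $\ell_\infty$-coherence of the rows enters, in contrast to the $\ell_2$-based parameter used in the purely bandlimited analysis of \cite{huang2020reconstruction}. Performing this estimate block by block over $t=0,\dots,T-1$ and recombining, one arrives at a self-referential bound of the form $\mathbb{E}\,\delta_s\le c_1\kappa\sqrt{1+\mathbb{E}\,\delta_s}+c_2\kappa$ with $\kappa^2\asymp\max_{0\le t\le T-1}\dfrac{s\,\nu_{k,t}^2\,\log^2(s)\,\log(k)\,\log(nT)}{m_t}$, where $\log(nT)$ is the logarithm of the cardinality of the space--time sampling domain and $\log(k),\log^2(s)$ come from the metric entropy of $D_s$. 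Solving this quadratic inequality for $\mathbb{E}\,\delta_s$ shows that condition \eqref{ripbound2}, which forces $\kappa\lesssim\eta$ for every $t$ once the universal constant is fixed, yields $\mathbb{E}\,\delta_s\le\eta+\eta^2$.

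Third, I would upgrade the bound in expectation to a high-probability statement. Viewing $\delta_s$ as a function of the $M$ independent samples, changing one sample drawn at time $t$ perturbs $\delta_s$ by at most $O(\nu_{k,t}^2/m_t)$, so a bounded-difference (or Bernstein-type) deviation inequality for suprema of empirical processes, of the kind collected in \cite{Foucart2013-sv}, gives $\mathbb{P}\bigl(\delta_s\ge\mathbb{E}\,\delta_s+\beta\bigr)\le\epsilon$ provided $m_t\gtrsim s\,\nu_{k,t}^2\,\beta^{-2}\log(\epsilon^{-1})$ for every $t$, which is exactly \eqref{ripbound1}. Combining the two steps gives $\delta_s\le\eta+\eta^2+\beta$ with probability at least $1-\epsilon$, as claimed.

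The step that requires genuine care, rather than bookkeeping, is the chaining bound: one must check that the correct ``boundedness constant'' for this physics-driven, non-i.i.d.\ sensing matrix is exactly the dynamic spectral graph coherence $\nu_{k,t}$ taken block by block---so that the per-time-step budgets $m_t$ genuinely decouple and the spatio-temporal trade-off is exposed---rather than a single worst-case constant, and that the resulting quadratic self-bound closes with workable numerical constants. A secondary technical point is that the row distribution is not identical across blocks (it varies with $t$ and with $\mathbf{p}_t$), so the metric-entropy and concentration estimates must be carried out with the block structure kept explicit and only recombined at the end over $t=0,\dots,T-1$.
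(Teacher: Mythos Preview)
Your proposal is correct and follows essentially the same two-step strategy as the paper: a symmetrization/Dudley chaining bound on $\mathbb{E}\,\delta_s$ yielding the self-referential inequality (the paper writes it as $F\le D\sqrt{F+1}$ with $D=56\sqrt{2}\sqrt{s}K_\infty\sqrt{\log(3M)\log(4k)}\log(4s)$), followed by a Talagrand-type deviation inequality for suprema of empirical processes to pass from expectation to high probability. One organizational remark: the paper does \emph{not} carry out the chaining ``block by block over $t$ and then recombine'' as you describe; instead it runs a single global Dudley estimate over all $M=\sum_t m_t$ rows at once, controlled by the single constant $K_\infty=\max_{t,i}\|X_{t,i}\|_\infty\le\max_t\nu_{k,t}/\sqrt{m_t}$, and the per-$t$ conditions \eqref{ripbound1}--\eqref{ripbound2} are precisely what force this global $K_\infty$ below the required threshold. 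The non-identically-distributed row issue you flag is therefore harmless: symmetrization requires only independence, and the conditional Rademacher/Dudley lemma applies to arbitrary deterministic vectors, so no block-wise decomposition is needed.
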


\subsection{Proof of \Cref{thm:RIP}}
Before we proceed to provide the proof for \Cref{thm:RIP}, let's introduce some notations and  some supporting results.   Let $\mathbf{x}_1,\cdots,\mathbf{x}_M$ be vectors in $\mathbb{R}^{k}$. Assume that  $K:=\max_{\ell\in[M]}\max_{\mathcal{S}\subseteq[k]}\|(\mathbf{x}_\ell)_{\mathcal{S}}\|$  and $K_{\infty}:=\max_{\ell\in[M]}\|\mathbf{x}_{\ell}\|_{\infty}$. Let's introduce the set 
\begin{equation}\label{eqn:S-sparse-sets}
D_{s,k}:=\{\mathbf{z}\in\mathbb{R}^{k}:\|\mathbf{z}\|_2\leq 1,\|\mathbf{z}\|_0\leq s\}=\bigcup_{\mathcal{S}\subseteq[k],|\mathcal{S}|=s} B_\mathcal{S},
\end{equation} 
where $B_\mathcal{S}$ denotes the unit sphere in $\mathbb{R}^\mathcal{S}$ with respect to the $\ell_2$-norm. Also, let's introduce the seminorm 
\[\interleave \mathbf{B}\interleave_{s}:=\sup_{\mathbf{z}\in D_{s,k}}|\langle \mathbf{B}\mathbf{z},\mathbf{z}\rangle|
\]
 on $\mathbb{R}^{k\times k}$ and the norm 
\begin{eqnarray}\label{eqn:seminorm}
    \|\mathbf{u}\|_{X}:=\max_{\ell\in[M]}|\langle \mathbf{x}_{\ell},\mathbf{u}\rangle|,\mathbf{u}\in\mathbb{R}^k. 
\end{eqnarray}
\begin{lemma}[{\cite[Lemma 12.37]{Foucart2013-sv}}]\label{lmm:coveringnumber}
    Let $U$ be a subset of $B_{\|\cdot\|_1}^k:=\{\mathbf{x}\in\mathbb{R}^k,\|\mathbf{x}\|_1\leq 1\}$ and $\|\cdot\|_{X}$ be the seminorm in \eqref{eqn:seminorm} defined for vectors $\mathbf{x}_{\ell}$ with $\|\mathbf{x}_{\ell}\|_{\infty}\leq K_{\infty}$ for all $\ell\in[M]$. 
Then for $0<t\leq 3K_{\infty}$,
    \[
    \log(2\mathcal{N}(U,\|\cdot\|_{X},t))\leq 6K_{\infty}\sqrt{\log(3M)\log(4k)}/t.
    \]
\end{lemma}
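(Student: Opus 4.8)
The plan is to prove this covering-number estimate by Maurey's empirical method, which is the standard route for bounding the entropy of $\ell_1$-type sets measured in an $\ell_\infty$-style norm, and is exactly the argument behind the cited Lemma 12.37 of \cite{Foucart2013-sv}. The starting observation is that $B_{\|\cdot\|_1}^k$ is the convex hull of its extreme points $\{\pm\mathbf e_j\}_{j=1}^k$, so every $\mathbf x\in U$ admits a representation $\mathbf x=\sum_{j}\lambda_j\mathbf v_j$ with $\mathbf v_j\in\{\pm\mathbf e_1,\dots,\pm\mathbf e_k\}$, $\lambda_j\ge 0$, and $\sum_j\lambda_j\le 1$. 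I would read this as an expectation: let $\mathbf Z$ be the random atom taking value $\mathbf v_j$ with probability $\lambda_j$ (and $\mathbf 0$ with the leftover mass), so that $\mathbb E\mathbf Z=\mathbf x$ and $\mathbf Z$ is supported on the $2k+1$ atoms $\{\mathbf 0,\pm\mathbf e_1,\dots,\pm\mathbf e_k\}$.

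First I would control the approximation quality of an empirical average. Fix an integer $L$, draw i.i.d. copies $\mathbf Z_1,\dots,\mathbf Z_L$ of $\mathbf Z$, and set $\bar{\mathbf Z}=\tfrac1L\sum_{i=1}^L\mathbf Z_i$. Then $\|\bar{\mathbf Z}-\mathbf x\|_X=\max_{\ell\in[M]}\big|\tfrac1L\sum_{i}\langle\mathbf x_\ell,\mathbf Z_i-\mathbf x\rangle\big|$ is a maximum of $M$ averages of bounded, mean-zero terms, since $|\langle\mathbf x_\ell,\mathbf v\rangle|\le\|\mathbf x_\ell\|_\infty\le K_\infty$ for every atom $\mathbf v$. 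Symmetrizing and applying a sub-Gaussian maximal inequality over the $M$ functionals (and their negatives) yields a bound of the form $\mathbb E\|\bar{\mathbf Z}-\mathbf x\|_X\le c\,K_\infty\sqrt{\log(3M)/L}$. In particular some realization of $\bar{\mathbf Z}$ lies within $\|\cdot\|_X$-distance $c\,K_\infty\sqrt{\log(3M)/L}$ of $\mathbf x$, so the collection of all $L$-term empirical averages of the atoms forms a net for $U$ at that radius.

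Next I would count this net and balance the two effects. Each $L$-term average is determined by an ordered choice of $L$ atoms from the $2k+1$ available, so the net has at most $(2k+1)^L$ elements, whence its log-cardinality is at most $L\log(4k)$. It then remains to optimize the free parameter $L$ against the target radius $t$: relating $L$ to $t$ through the approximation bound and inserting the result into the cardinality count, together with the hypothesis $0<t\le 3K_\infty$ (which keeps the admissible number of empirical terms in range), produces the stated estimate $\log\big(2\mathcal N(U,\|\cdot\|_X,t)\big)\le 6K_\infty\sqrt{\log(3M)\log(4k)}/t$. The two logarithmic factors then have transparent origins: $\log(3M)$ is the price of the maximum over the $M$ functionals in the maximal inequality, while $\log(4k)$ is the price of the cardinality of the atom dictionary.

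The main obstacle is the approximation step rather than the counting. A naive deterministic estimate $\|\cdot\|_X\le K_\infty\|\cdot\|_1$ is far too lossy and would destroy the $\sqrt{\log M}$ gain; the entire point is the cancellation supplied by empirical averaging, which is precisely what the symmetrization plus sub-Gaussian maximal inequality delivers. A crude choice of $L$ only yields a bound decaying like $t^{-2}$, so the delicate part is the careful balancing of net radius against net cardinality needed to obtain the sharper $t^{-1}$ scaling with the precise constant $6$; for this bookkeeping I would follow \cite[Lemma 12.37]{Foucart2013-sv}.
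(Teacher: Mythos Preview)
The paper does not supply its own proof of this lemma; it simply cites \cite[Lemma 12.37]{Foucart2013-sv}. Your Maurey empirical-method outline is precisely the argument used there, so in that sense your approach matches the intended one.

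However, there is a genuine gap at the end of your proposal, and it is caused by a typo in the lemma as printed in the paper. Maurey's method gives exactly what you describe: an $L$-term empirical net of log-cardinality $\lesssim L\log(4k)$ at radius $\lesssim K_\infty\sqrt{\log(3M)/L}$, which upon setting the radius equal to $t$ forces $L\asymp K_\infty^2\log(3M)/t^2$ and hence $\log\mathcal N\lesssim K_\infty^2\log(3M)\log(4k)/t^2$. This is the $t^{-2}$ bound you call ``crude,'' but it is in fact the sharp output of Maurey's argument, and no ``delicate balancing'' will improve it to $t^{-1}$ for $\log\mathcal N$ itself. The correct statement of the lemma has $\sqrt{\log(2\mathcal N(U,\|\cdot\|_X,t))}$ on the left-hand side, not $\log(2\mathcal N(\cdots))$; this is confirmed by the way the paper actually invokes the lemma inside the proof of \Cref{lmm:rademacher_est}, where it writes
\[
\sqrt{\log(2\mathcal N(B_{\|\cdot\|_1}^k,\|\cdot\|_X,t/\sqrt{s}))}\leq 6\sqrt{s}K_\infty\sqrt{\log(3M)\log(4k)}/t.
\]
Once the square root is restored, your Maurey bound $\log\mathcal N\lesssim K_\infty^2\log(3M)\log(4k)/t^2$ immediately yields the stated $t^{-1}$ estimate for $\sqrt{\log\mathcal N}$, and the final paragraph of your proposal becomes unnecessary.
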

\begin{prop}[{\cite[Proposition C.3]{Foucart2013-sv}}]\label{prop:covingnumber}
    Let $\|\cdot\|$ be some norm on $\mathbb{R}^k$ and let U be a subset of the unit ball $B=\{\mathbf{x}\in\mathbb{R}^k:\|\mathbf{x}\|\leq 1\}$. Then the  covering number satisfies, for $t>0$,
    \[
    \mathcal{N}(U,\|\cdot\|,t)\leq(1+2/t)^{k}.
    \]
\end{prop}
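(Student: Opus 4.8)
The plan is to prove this by the classical volumetric packing argument. The key observation is that a \emph{maximal} $t$-separated subset of $U$ is automatically a $t$-cover of $U$, so it suffices to bound the cardinality of such a set by a volume comparison in $\mathbb{R}^k$. Concretely, I would first select a maximal collection $\{\mathbf{x}_1,\dots,\mathbf{x}_N\}\subseteq U$ that is pairwise $t$-separated, meaning $\|\mathbf{x}_i-\mathbf{x}_j\|>t$ for all $i\neq j$. By maximality, no point of $U$ can be added without violating separation, so every $\mathbf{y}\in U$ satisfies $\|\mathbf{y}-\mathbf{x}_i\|\leq t$ for some $i$; hence $\{\mathbf{x}_1,\dots,\mathbf{x}_N\}$ is a $t$-cover and $\mathcal{N}(U,\|\cdot\|,t)\leq N$. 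It therefore remains to show $N\leq(1+2/t)^k$.

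Next I would exploit the separation via disjointness of half-radius balls. Writing $B(\mathbf{x},r)=\{\mathbf{z}:\|\mathbf{z}-\mathbf{x}\|<r\}$, the open balls $B(\mathbf{x}_i,t/2)$ are pairwise disjoint: if some $\mathbf{z}$ lay in $B(\mathbf{x}_i,t/2)\cap B(\mathbf{x}_j,t/2)$, the triangle inequality would give $\|\mathbf{x}_i-\mathbf{x}_j\|\leq\|\mathbf{x}_i-\mathbf{z}\|+\|\mathbf{z}-\mathbf{x}_j\|<t$, contradicting separation. Moreover, since each $\mathbf{x}_i\in U\subseteq B$ satisfies $\|\mathbf{x}_i\|\leq 1$, a further application of the triangle inequality shows $B(\mathbf{x}_i,t/2)\subseteq B(\mathbf{0},1+t/2)$ for every $i$. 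Thus the $N$ disjoint balls all fit inside the single enlarged ball $B(\mathbf{0},1+t/2)$.

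The final step is the volume count. For any norm on $\mathbb{R}^k$, its unit ball is a bounded, symmetric convex body with strictly positive and finite Lebesgue measure $V:=\mathrm{vol}(B(\mathbf{0},1))$, and a ball of radius $r$ is a dilate of the unit ball, so $\mathrm{vol}(B(\mathbf{x},r))=r^kV$ by homogeneity of Lebesgue measure under scaling. Combining disjointness with containment gives
\[
N\left(\tfrac{t}{2}\right)^k V=\sum_{i=1}^N\mathrm{vol}\!\left(B(\mathbf{x}_i,\tfrac{t}{2})\right)\leq\mathrm{vol}\!\left(B(\mathbf{0},1+\tfrac{t}{2})\right)=\left(1+\tfrac{t}{2}\right)^k V.
\]
Cancelling the positive finite factor $V$ and rearranging yields
\[
N\leq\left(\frac{1+t/2}{t/2}\right)^k=\left(1+\frac{2}{t}\right)^k,
\]
which completes the argument.

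I do not anticipate a serious obstacle, as this is a standard estimate; the only point requiring care is the volume-scaling identity $\mathrm{vol}(B(\mathbf{x},r))=r^kV$, which hinges on the unit ball of a genuine norm on a finite-dimensional space having positive and finite measure so that the factor $V$ may be cancelled. One should also note that the argument places the net points inside $U$ (an ``internal'' cover), which is exactly what the covering number $\mathcal{N}(U,\|\cdot\|,t)$ as used in \Cref{lmm:coveringnumber} requires; allowing net points outside $U$ would only decrease the count.
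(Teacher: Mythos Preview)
Your proposal is correct and follows precisely the standard volumetric packing argument; this is exactly the proof given in the cited reference \cite[Proposition~C.3]{Foucart2013-sv}, while the present paper simply quotes the result without reproving it.
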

\begin{theorem}[Dudley's inequality, {\cite[Theorem 8.23]{Foucart2013-sv}}]
   Let $X_t$, $t \in T$, be a centered subgaussian process with associated pseudometric $d$. Then, for any $t_0 \in T$, 
   \begin{equation}
       \mathbb{E} \sup_{t\in T} X_t \leq 4\sqrt{2} \int_{0}^{\Delta(T)/2}\sqrt{\log(\mathcal{N}(T,d,u))} du,
   \end{equation}
   \begin{equation}
       \mathbb{E} \sup_{t\in T}|X_t| \leq 4\sqrt{2} \int_{0}^{\Delta(T)/2}\sqrt{\log(2\mathcal{N}(T,d,u))} du,
   \end{equation}
   where $\Delta(T)=\sup_{t\in T} \sqrt{\mathbb{E}|X_t|^2}$ and $\mathcal{N}(T,d,u)$ denotes the covering number defined as the smallest integer $N$ such that there exists a subset $F$ of $T$ with $|F|=N$ and $\min_{s\in F} d(t, s) \leq u$ for all $t \in T $. 
\end{theorem}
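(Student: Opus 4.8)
The plan is to prove both bounds by the \emph{chaining} technique (Dudley's successive-approximation argument). Write $\Delta := \Delta(T)$. First I would reduce to the case of a finite index set: for infinite $T$ one replaces $T$ by an arbitrary finite subset $F$, notes that $\mathcal{N}(F,d,u)\leq\mathcal{N}(T,d,u)$, proves the estimate for $F$, and recovers the general case by taking the supremum over all finite $F$ via monotone convergence. This turns the problem into controlling a finite maximum of subgaussian increments.

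Next I would fix the reference point $t_0\in T$ and build nested nets. For each integer $j\geq0$ let $T_j\subseteq T$ be a minimal $d$-net at scale $2^{-j}\Delta$, so that $|T_j|=\mathcal{N}(T,d,2^{-j}\Delta)$ and every $t\in T$ has a nearest net point $\pi_j(t)\in T_j$ with $d(t,\pi_j(t))\leq 2^{-j}\Delta$; set $T_0=\{t_0\}$. Since $d(t,\pi_j(t))\to0$ and the increments are subgaussian, hence $L^2$-continuous, the telescoping series
\[
X_t-X_{t_0}=\sum_{j\geq1}\bigl(X_{\pi_j(t)}-X_{\pi_{j-1}(t)}\bigr)
\]
converges, and taking suprema gives $\sup_{t\in T}(X_t-X_{t_0})\leq\sum_{j\geq1}\max_{t\in T}\bigl(X_{\pi_j(t)}-X_{\pi_{j-1}(t)}\bigr)$.

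The core of the argument is the per-level estimate together with the passage to the integral. Each increment $X_{\pi_j(t)}-X_{\pi_{j-1}(t)}$ is centered subgaussian with parameter $d(\pi_j(t),\pi_{j-1}(t))\leq d(\pi_j(t),t)+d(t,\pi_{j-1}(t))\leq 3\cdot2^{-j}\Delta$, and at level $j$ there are at most $|T_j|\,|T_{j-1}|\leq|T_j|^2$ distinct such links. The subgaussian maximal inequality $\mathbb{E}\max_{i\leq N}Y_i\leq\sigma\sqrt{2\log N}$ with $N=|T_j|^2$ then yields
\[
\mathbb{E}\max_{t\in T}\bigl(X_{\pi_j(t)}-X_{\pi_{j-1}(t)}\bigr)\leq 6\cdot2^{-j}\Delta\sqrt{\log\mathcal{N}(T,d,2^{-j}\Delta)}.
\]
Summing over $j$ and using that $u\mapsto\mathcal{N}(T,d,u)$ is nonincreasing, so that $2^{-j}\Delta\sqrt{\log\mathcal{N}(T,d,2^{-j}\Delta)}\leq 2\int_{2^{-j-1}\Delta}^{2^{-j}\Delta}\sqrt{\log\mathcal{N}(T,d,u)}\,du$, converts the series into the Dudley integral $\int_0^{\Delta/2}\sqrt{\log\mathcal{N}(T,d,u)}\,du$. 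Because the process is centered, $\mathbb{E}X_{t_0}=0$, hence $\mathbb{E}\sup_{t}X_t=\mathbb{E}\sup_{t}(X_t-X_{t_0})$, which gives the first inequality. The second is identical except that the per-level maximum is taken over the absolute values $|X_{\pi_j(t)}-X_{\pi_{j-1}(t)}|$; the two-sided bound $\mathbb{E}\max_{i\leq N}|Y_i|\leq\sigma\sqrt{2\log(2N)}$ replaces $\mathcal{N}$ by $2\mathcal{N}$ inside the logarithm, producing the integrand $\sqrt{\log(2\mathcal{N}(T,d,u))}$.

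The conceptual steps above are routine once chaining is in place; the delicate part, and the main obstacle, is the constant bookkeeping needed to reach exactly $4\sqrt2$. The crude net radii used above produce a looser constant (around $12$), so the sharp result requires optimizing the choice of net scales and the discrete-sum-to-integral comparison, while ensuring the chaining series converges (this is precisely where the finiteness reduction and the $L^2$/subgaussian continuity of increments are invoked). I would conclude by carrying out this tightening following the precise accounting in \cite[Theorem 8.23]{Foucart2013-sv}.
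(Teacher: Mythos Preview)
The paper does not prove this theorem; it is quoted verbatim as a known result from \cite[Theorem~8.23]{Foucart2013-sv} and used as a black box in the proof of Lemma~\ref{lmm:rademacher_est}. Your chaining argument is the standard one and matches the proof in the cited reference, so in that sense your proposal is correct and aligned with what the paper relies on. Your own caveat about the constant is accurate: the naive bookkeeping you sketch gives a constant around $12$, and reaching $4\sqrt{2}$ requires the more careful accounting carried out in \cite{Foucart2013-sv}; since you explicitly defer to that source for the tightening, there is no gap.
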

\begin{lemma}\label{lmm:rademacher_est}
    Let $\mathbf{x}_1,\cdots,\mathbf{x}_M$ be vectors in $\mathbb{R}^{k}$ with $\max_{\ell\in[M]}\max_{\mathcal{S}\subseteq[k]}\|(\mathbf{x}_\ell)_{\mathcal{S}}\|\leq K $ and $\max_{\ell\in[M]}\|\mathbf{x}_\ell\|_{\infty}\leq K_{\infty}$.  Suppose $\left(\varepsilon_{\ell}\right)_{\ell=1}^M$ is a Rademacher sequence.  Then, for $s\leq M$,
\[
\mathbb{E}\interleave \sum_{\ell=1}^M\varepsilon_{\ell}\mathbf{x}_{\ell}\mathbf{x}_{\ell}^\top\interleave_s\leq 28\sqrt{2}R\sqrt{s}K_{\infty}\sqrt{\log(3M)\log(4k)}\log(4s), 
\]
where $R= \sqrt{\interleave \sum_{\ell=1}^M\mathbf{x}_{\ell}\mathbf{x}_{\ell}^\top\interleave_{s}}$.
\end{lemma}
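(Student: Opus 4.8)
\textbf{Proof proposal for Lemma~\ref{lmm:rademacher_est}.}

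The plan is to bound the expected seminorm $\mathbb{E}\interleave \sum_{\ell=1}^M\varepsilon_{\ell}\mathbf{x}_{\ell}\mathbf{x}_{\ell}^\top\interleave_s$ by viewing it as the supremum of a Rademacher chaos/process over the index set $D_{s,k}$ and applying Dudley's inequality. First I would rewrite
\[
\interleave \sum_{\ell=1}^M\varepsilon_{\ell}\mathbf{x}_{\ell}\mathbf{x}_{\ell}^\top\interleave_s
= \sup_{\mathbf{z}\in D_{s,k}}\Big|\sum_{\ell=1}^M \varepsilon_{\ell}\langle \mathbf{x}_{\ell},\mathbf{z}\rangle^2\Big|,
\]
so that, conditionally on nothing (the $\mathbf{x}_\ell$ are fixed), the object is the supremum of the Rademacher process $X_{\mathbf{z}} := \sum_{\ell}\varepsilon_\ell \langle \mathbf{x}_\ell,\mathbf{z}\rangle^2$ indexed by $\mathbf{z}\in D_{s,k}$. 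This process is subgaussian with respect to the pseudometric $d(\mathbf{z},\mathbf{z}') = \big(\sum_\ell (\langle\mathbf{x}_\ell,\mathbf{z}\rangle^2 - \langle\mathbf{x}_\ell,\mathbf{z}'\rangle^2)^2\big)^{1/2}$. The standard factorization is $\langle\mathbf{x}_\ell,\mathbf{z}\rangle^2 - \langle\mathbf{x}_\ell,\mathbf{z}'\rangle^2 = \langle\mathbf{x}_\ell,\mathbf{z}-\mathbf{z}'\rangle\langle\mathbf{x}_\ell,\mathbf{z}+\mathbf{z}'\rangle$, which lets me dominate $d(\mathbf{z},\mathbf{z}')$ by $2\|\mathbf{z}-\mathbf{z}'\|_X \cdot R$ where $\|\cdot\|_X$ is the norm in \eqref{eqn:seminorm} and $R=\sqrt{\interleave\sum_\ell \mathbf{x}_\ell\mathbf{x}_\ell^\top\interleave_s}$ controls $\max_\ell\sup_{\mathbf{z}\in D_{s,k}}|\langle\mathbf{x}_\ell,\mathbf{z}\rangle|$ (since $\sum_\ell\langle\mathbf{x}_\ell,\mathbf{z}\rangle^2 = \langle(\sum_\ell\mathbf{x}_\ell\mathbf{x}_\ell^\top)\mathbf{z},\mathbf{z}\rangle \le R^2$ for $\mathbf{z}\in D_{s,k}$, hence also each term is $\le R^2$). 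Thus $d(\mathbf{z},\mathbf{z}')\le 2R\|\mathbf{z}-\mathbf{z}'\|_X$ and the diameter $\Delta(D_{s,k})$ in $d$ is at most $2R\cdot \sup_{\mathbf{z}\in D_{s,k}}\|\mathbf{z}\|_X \le 2R\cdot\big(\max_\ell\sup_{\mathbf z}|\langle\mathbf x_\ell,\mathbf z\rangle|\big)\le 2R^2$, and more crudely one also gets a bound in terms of $R$ alone.

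Next I would apply Dudley's inequality to get
\[
\mathbb{E}\,\sup_{\mathbf{z}\in D_{s,k}}|X_{\mathbf{z}}| \le 4\sqrt{2}\int_0^{\Delta/2}\sqrt{\log\big(2\mathcal{N}(D_{s,k},d,u)\big)}\,du
\le 4\sqrt{2}\int_0^{R^2}\sqrt{\log\big(2\mathcal{N}(D_{s,k},\,2R\|\cdot\|_X,\,u)\big)}\,du,
\]
and after the substitution $u = 2Rt$ this becomes $8\sqrt{2}R\int_0^{R/2}\sqrt{\log(2\mathcal{N}(D_{s,k},\|\cdot\|_X,t))}\,dt$. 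Now I would estimate the covering numbers $\mathcal{N}(D_{s,k},\|\cdot\|_X,t)$ by the usual two-regime argument: for small $t$ use the volumetric bound (Proposition~\ref{prop:covingnumber}) applied on each of the $\binom{k}{s}$ coordinate subspaces $B_{\mathcal S}$, giving $\log\mathcal{N} \lesssim s\log(ek/s) + s\log(1+ c/t)$; for larger $t$, since $D_{s,k}\subset B_{\|\cdot\|_1}^k \cdot \sqrt{s}$ (an $s$-sparse unit $\ell_2$-ball has $\ell_1$-norm at most $\sqrt s$), rescale and use Lemma~\ref{lmm:coveringnumber} with $K_\infty$ to obtain $\log(2\mathcal{N}) \lesssim K_\infty\sqrt{s\log(3M)\log(4k)}/t$. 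Splitting the Dudley integral at an appropriate crossover point $t^\ast$ and integrating each piece (the first piece contributes $\lesssim \sqrt{s\log(k/s)}\cdot t^\ast$ up to log factors, the second $\lesssim K_\infty\sqrt{s\log(3M)\log(4k)}\log(1/t^\ast)$) and optimizing yields, after absorbing the $\log(ek/s)$ and $\log(1/t^\ast)$ factors into a single $\log(4s)$ factor, the claimed bound $28\sqrt 2\, R\sqrt s\, K_\infty\sqrt{\log(3M)\log(4k)}\log(4s)$.

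The main obstacle I anticipate is the bookkeeping in the two-regime covering-number estimate and the choice of crossover point so that the final constant comes out as $28\sqrt 2$ and the logarithmic factor is exactly $\log(4s)$ rather than, say, $\log(4s)\log(ek/s)$ or an extra $\sqrt{\log k}$; this requires carefully using the $\ell_1$-embedding $D_{s,k}\subset \sqrt s\, B_{\|\cdot\|_1}^k$ together with the scaling properties of $\|\cdot\|_X$ and a judicious truncation of the Dudley integral (and possibly iterating the integral bound, i.e.\ a "peeling" step, to kill a stray $\log$). The subgaussian-increment verification and the reduction to $\|\cdot\|_X$-covering are routine; the constant-chasing and log-factor control is where the real work lies. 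This is essentially the argument behind \cite[Lemma 12.38 / Theorem 12.32]{Foucart2013-sv}, adapted to the seminorm $\interleave\cdot\interleave_s$.
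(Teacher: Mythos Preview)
Your proposal is correct and follows essentially the same argument as the paper: rewrite the seminorm as a Rademacher process over $D_{s,k}$, bound the increment metric by $2R\|\cdot\|_X$, apply Dudley's inequality, and split the covering-number integral into a small-$t$ volumetric regime and a large-$t$ $\ell_1$-embedding regime via $D_{s,k}\subset\sqrt{s}\,B_{\|\cdot\|_1}^k$ together with Lemma~\ref{lmm:coveringnumber}. The paper chooses the crossover at $\kappa=K_\infty/3$ and uses the upper integration limit $\Delta/4\le K/4$ (not $R/2$) to obtain the constant $28\sqrt{2}$ directly, with no peeling needed.
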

\begin{proof}
    Observe that 
    \[
    E:=\mathbb{E}\interleave \sum_{\ell=1}^M\varepsilon_{\ell}\mathbf{x}_{\ell}\mathbf{x}_{\ell}^\top\interleave_s=\mathbb{E}\sup_{\mathbf{u}\in D_{s,k}}\left| \sum_{\ell=1}^M\varepsilon_{\ell}\langle \mathbf{x}_{\ell},\mathbf{u}\rangle^2\right|.
    \]
This is the supremum of the Rademacher process $X_{u}=\sum_{\ell=1}^M\varepsilon_{\ell}\langle \mathbf{x}_{\ell},\mathbf{u}\rangle^2$, which has associated pseudometric
\[
d(\mathbf{u},\mathbf{v})=(\mathbb{E}|X_{\mathbf{u}}-X_{\mathbf{v}}|^2)^{1/2}=\sqrt{\sum_{\ell=1}^M\left(\langle \mathbf{x}_{\ell},\mathbf{u}\rangle^2-\langle \mathbf{x}_{\ell},\mathbf{v}\rangle^2\right)^2}.
\]
Then, for $\mathbf{u},\mathbf{v}\in D_{s,k}$, the triangle inequality gives
\begin{eqnarray*}
    d(\mathbf{u},\mathbf{v})&=&\sqrt{\sum_{\ell=1}^M\left(\langle \mathbf{x}_{\ell},\mathbf{u}\rangle-\langle \mathbf{x}_{\ell},\mathbf{v}\rangle\right)^2\left(\langle \mathbf{x}_{\ell},\mathbf{u}\rangle+\langle \mathbf{x}_{\ell},\mathbf{v}\rangle\right)^2}\\
    &\leq& \sqrt{\sum_{\ell=1}^M\langle \mathbf{x}_{\ell},\mathbf{u}+\mathbf{v} \rangle^2}\max_{\ell\in[M]}\left| \langle \mathbf{x}_{\ell},\mathbf{u}-\mathbf{v} \rangle\right|\\
    &\leq& \sqrt{\sum_{\ell=1}^M(2\langle \mathbf{x}_{\ell},\mathbf{u}\rangle^2+\langle \mathbf{x}_{\ell},\mathbf{v} \rangle^2)}\max_{\ell\in[M]}\left| \langle \mathbf{x}_{\ell},\mathbf{u}-\mathbf{v} \rangle\right|\leq 2R\max_{\ell\in[M]}\left| \langle \mathbf{x}_{\ell},\mathbf{u}-\mathbf{v} \rangle\right|,
\end{eqnarray*}
where $R=\sup_{\tilde{\mathbf{u}}\in D_{s,k}} \sqrt{\sum_{\ell=1}^M\langle \mathbf{x}_{\ell},\tilde{\mathbf{u}}\rangle^2}=\sqrt{\interleave\sum_{\ell=1}^Mx_{\ell}\mathbf{x}_{\ell}^\top\interleave_s}$.  
Therefore, we have
\[
d(\mathbf{u},\mathbf{v})/(2R)\leq \|\mathbf{u}-\mathbf{v}\|_{X}.
\]
According to the Dudley's inequality, we have that
\begin{align*}
   E=\mathbb{E}\sup_{\mathbf{u}\in D_{s,k}}\left|X_{\mathbf{u}}\right|&\leq 4\sqrt{2}\int_{0}^{\Delta(D_{s,k})/2}\sqrt{\log(2\mathcal{N}(D_{s,k},d,t))}dt\\
\leq&8\sqrt{2}R\int_{0}^{\Delta/4}\sqrt{\log(2\mathcal{N}(D_{s,k},\|\cdot\|_{X},t))}dt
\end{align*}
where $\Delta(D_{s,k})=\sup_{\mathbf{u}\in D_{s,k}}\sqrt{\mathbb{E}|X_{\mathbf{u}}|^2}\leq \sup_{u\in D_{s,k}}\sqrt{\sum_{\ell=1}^M\langle \mathbf{x}_{\ell},\mathbf{u}\rangle^4}\leq R\sup_{\mathbf{u}\in D_{s,k}}\|\mathbf{u}\|_{X}=:R \Delta$.

By the Cauchy-Schwarz inequality, for $\mathbf{u}\in D_{s,k}$,
\[
\|\mathbf{u}\|_{X}=\max_{\ell\in[M]}|\langle \mathbf{x}_{\ell},\mathbf{u}\rangle|\leq \max_{\ell\in[M]}\max_{\mathcal{S}\subseteq[k]}\|(\mathbf{x}_{\ell})_{\mathcal{S}}\|_2.
\]
Therefore, $\Delta\leq \max_{\ell\in[M]}\max_{\mathcal{S}\subseteq[k]}\|(\mathbf{x}_{\ell})_{\mathcal{S}}\|_2=:K
$. Since $\|(\mathbf{x}_{\ell})_{\mathcal{S}}\|_2\leq\sqrt{s}\|\mathbf{x}_{\ell}\|_{\infty}\leq\sqrt{s}K_{\infty}$, we also have $\Delta\leq\sqrt{s}K_{\infty}$. 

Next, our goal is to estimate the covering numbers $\mathcal{N}(D_{s,k},\|\cdot\|_{X},t)$. We do this in two different ways. One estimate is good for small values of $t$ and the other one for large values of $t$. For small values of $t$, we use \Cref{prop:covingnumber}. 
Note that $\|\mathbf{x}\|_{X}\leq K\|\mathbf{x}\|_2$. Then using the subadditivity of the covering numbers, we have
\begin{align*}
  \mathcal{N}(D_{s,k},\|\cdot\|_{X},t)\leq&\sum_{\mathcal{S}\subseteq[N],|\mathcal{S}|=s}\mathcal{N}(B_{\mathcal{S}},K\|\cdot\|_2,t)\\
  =&\sum_{\mathcal{S}\subseteq[N],|\mathcal{S}|=s}\mathcal{N}(B_{\mathcal{S}},\|\cdot\|_2,t/K)\leq\binom{k}{s}\left(1+\frac{2K}{t}\right)^{s}\\
  \leq&\left(\frac{ek}{s}\right)^s\left(1+\frac{2K}{t}\right)^{s}.
\end{align*}
Therefore, for $t>0$, 
\[
\sqrt{\log(2\mathcal{N}(D_{s,k},\|\cdot\|_{X},t))}\leq \sqrt{s}\sqrt{\log({ek}/{s})+\log(1+{2K}/{t})}\leq \sqrt{s}(\sqrt{\log({ek}/{s})}+\sqrt{\log(1+{2K}/{t})}).
\] 
Additionally, notice that \[
D_{s,k}\subseteq \sqrt{s}B_{\|\cdot\|_1}^{k}, \text{ where }B_{\|\cdot\|_1}^{k}:= \{x\in\mathbb{R}^{k},\|x\|_1\leq 1\}. 
\]
According to \Cref{lmm:coveringnumber} and the property of covering number, we have
\begin{align*}
    \sqrt{\log(2\mathcal{N}(D_{s,k},\|\cdot\|_{X},t))}\leq &\sqrt{\log(2\mathcal{N}(\sqrt{s}B_{\|\cdot\|_1}^{k},\|\cdot\|_{X},t))}\\
    =&\sqrt{\log(2\mathcal{N}(B_{\|\cdot\|_1}^{k},\|\cdot\|_{X},t/\sqrt{s}))}\leq 6\sqrt{s}K_{\infty}\sqrt{\log(3M)\log(4k)}/t,
\end{align*}
for $0<t\leq 3\sqrt{s}K_{\infty}$. 
Then
\begin{align*}
&\int_{0}^{\Delta/4}\sqrt{\log(2\mathcal{N}(D_{s,k},\|\cdot\|_X,t))}dt\\
\leq&\int_{0}^{\kappa}\sqrt{s}(\sqrt{\log({ek}/{s})}+\sqrt{\log(1+{2K}/{t})})dt+\int_{\kappa}^{K/4} 6\sqrt{s}K_{\infty}\sqrt{\log(3M)\log(4k)}/t dt\\
\leq& \sqrt{s}\kappa\sqrt{\log(ek/s)}+\sqrt{s}\kappa \sqrt{\log(e(1+{2K}/{\kappa}))}+6\sqrt{s}K_{\infty}\sqrt{\log(3M)\log(4k)}\log(K/(4\kappa))\\
\end{align*}
where 
\begin{align*}
\int_{0}^{\kappa}\sqrt{\log(1+2K/t)}dt
    \leq& \sqrt{\int_{0}^{\kappa}dt\int_{0}^{\kappa}\log(1+2K /t)dt}\\
    =&\sqrt{2K\kappa\int_{2K/\kappa}^{\infty}y^{-2}\log(1+y)dy}\\
    =&\sqrt{2K\kappa}\sqrt{-y^{-1}\log(1+y)|_{2K/\kappa}^{\infty}+\int_{2K/\kappa}^{\infty}\frac{1}{y(1+y)}dy}\\
        \leq &\sqrt{2K\kappa}\sqrt{-y^{-1}\log(1+y)|_{2K/\kappa}^{\infty}+\int_{2K/\kappa}^{\infty}\frac{1}{y^2}dy}\\
    =&\sqrt{2K\kappa}\sqrt{\frac{\kappa}{2K}\log(1+\frac{2K}{\kappa})+\frac{\kappa}{2K} }\\
    \leq& \kappa \sqrt{\log(1+\frac{2K}{\kappa})+1}=\kappa \sqrt{\log(e(1+\frac{2K}{\kappa}))}.
\end{align*}
Let's choose $\kappa=K_{\infty}/3$, we have 
\[
\int_{0}^{\Delta/4}\sqrt{\log(2\mathcal{N}(D_{s,k},\|\cdot\|_X,t))}dt\leq \frac{7}{2}\sqrt{s}K_{\infty}\sqrt{\log(3M)\log(4k)}\log(4s).
\]
 
Therefore, 
\[
E\leq 28\sqrt{2}R\sqrt{s}K_{\infty}\sqrt{\log(3M)\log(4k)}\log(4s)
\]
\end{proof}

Now we are ready to provide the proof for \Cref{thm:RIP}.
\begin{proof}[The proof of \Cref{thm:RIP}]
$\pi_{ \mathbf{A},T}(x)\in\text{span}(\widetilde{\mathbf{U}}_{k,T})$, we thus only need to study the property of the matrix $\mathbf{P}_{\Omega} ^{-1/2}\mathbf{W}^{\frac{1}{2}}\mathbf{S}\widetilde{\mathbf{U}}_{k,T}$. We use the characterization of the restricted isometry constants, 
\begin{equation}\label{eqn:RIC4R2}
\begin{aligned}
    \delta_s=&\max_{S\subseteq[k],|S|=s}\|(\mathbf{P}_{\Omega}^{-1/2}\mathbf{W}^{\frac{1}{2}}\mathbf{S}\widetilde{\mathbf{U}}_{k,T}\mathbf{R}_S)^\top(\mathbf{P}_{\Omega}^{-1/2}\mathbf{W}^{\frac{1}{2}}\mathbf{S}\widetilde{\mathbf{U}}_{k,T}\mathbf{R}_S)-\id\|_{2}\\
    =&\interleave (\mathbf{P}_{\Omega}^{-1/2}\mathbf{W}^{\frac{1}{2}}\mathbf{S}\widetilde{\mathbf{U}}_{k,T})^\top(\mathbf{P}_{\Omega}^{-1/2}\mathbf{W}^{\frac{1}{2}}\mathbf{S}\widetilde{\mathbf{U}}_{k,T})-\mathrm{I} \interleave_s,
    \end{aligned}
\end{equation}
where $R_\mathcal{S}$ denotes the restriction matrix (of appropriate dimensions) such that $MR_\mathcal{S}$ is the restriction of any matrix $M$ to its columns indexed by $\mathcal{S}$.  

Notice that
\begin{equation*}
    \begin{aligned}
       \mathbf{X}:= &   (\mathbf{P}_{\Omega}^{-1/2}\mathbf{W}^{\frac{1}{2}}\mathbf{S}\widetilde{\mathbf{U}}_{k,T})^\top \mathbf{P}_{\Omega}^{-1/2} \mathbf{W}^{\frac{1}{2}}\mathbf{S}\widetilde{\mathbf{U}}_{k,T}\\
        =& \sum_{t=0}^{T-1}\sum_{i=1}^{m_t} \frac{(\mathbf{F}_{k,T}\mathbf{\Lambda}_k^t\mathbf{U}_k^{\top}\delta_{t,\omega_{t,i}}\delta_{t,\omega_{t,i}}^{\top} \mathbf{U}_k \mathbf{\Lambda}_k^t\mathbf{F}_{k,T}) }{m_t\mathbf{p}_{t}(\omega_{t,i})},
    \end{aligned} 
\end{equation*} 
where $\delta_{t,\omega_{t,i}}=\begin{cases}
    1,\omega_{t,i}=t\\
    0,\textnormal{otherwise}
\end{cases}$, 
    $F_{k,T} = \text{diag}(\begin{bmatrix} f_T(\lambda_1) & f_T(\lambda_2) & \cdots &f_T(\lambda_k)\end{bmatrix}) \text{ and }
    \Lambda_k = \text{diag}(\begin{bmatrix}
        \lambda_1 & \lambda_2 & \cdots & \lambda_k
    \end{bmatrix})$. 
Set $\mathbf{X}_{t,i}= \frac{\mathbf{F}_{k,T}\mathbf{\Lambda}_k^t\mathbf{U}_k^{\top}\delta_{t,\omega_{t,i}}}{\sqrt{m_t\mathbf{p}_{t}(\omega_{t,i})}}$. Then $\mathbf{X} = \sum_{t=0}^{T-1}\sum_{i=1}^{m_t} \mathbf{X}_{t,i}\mathbf{X}_{t,i}^\top$.

Thus $\mathbf{X}$ is a sum of $M:=\sum_{t=0}^{T-1}m_t$ independent, random, self-adjoint, positive semi-definite matrices.
Note that
\begin{equation*}
    \begin{aligned}
     \mathbb{E}(\mathbf{X}_{t,i}\mathbf{X}_{t,i}^\top) 
     =&\mathbb{E}\left(  \frac{\mathbf{F}_{k,T}\mathbf{\Lambda}_k^t\mathbf{U}_k^{\top}\delta_{t,\omega_{t,i}}\delta_{t,\omega_{t,i}}^{\top} \mathbf{U}_k \Lambda_k^t\mathbf{F}_{k,T} }{m_t\mathbf{p}_{t}(\omega_{t,i})}\right)\\
     =& \sum_{i=1}^{n}\mathbf{p}_{t}(i)\cdot  \frac{(\mathbf{F}_{k,T}\mathbf{\Lambda}_k^t\mathbf{U}_k^{\top}\delta_{t,i}\delta_{t,i}^{\top} \mathbf{U}_k \mathbf{\Lambda}_k^t\mathbf{F}_{k,T}) }{m_t\mathbf{p}_{t}(i)}
      =\frac{1}{m_t} \mathbf{\Lambda}_{k}^{2t}\mathbf{F}_{k,T}^2.
    \end{aligned}
\end{equation*}
Thus $\sum_{t=0}^{T-1}\sum_{i=1}^{m_t} \mathbb{E}(\mathbf{X}_{t,i}\mathbf{X}_{t,i}^\top)=
\diag(\frac{\sum_{t=0}^{T-1} \lambda_1^{2t}}{f_{T}^2(\lambda_1)},\frac{\sum_{t=0}^{T-1} \lambda_2^{2t}}{f_{T}^2(\lambda_2)},\cdots,\frac{\sum_{t=0}^{T-1}  \lambda_k^{2t}}{f_{T}^2(\lambda_k)})=\id$.
Therefore,
\[
\delta_s=\interleave \sum_{t=0}^{T-1}\sum_{i=1}^{m_t}(\mathbf{X}_{t,i}\mathbf{X}_{t,i}^\top-\mathbb{E}(\mathbf{X}_{t,i}\mathbf{X}_{t,i}^\top))\interleave_{s}.
\]
Let's first consider the expectation of $\delta_s$. Using symmetrization, we have
\[
\mathbb{E}\interleave \sum_{t=0}^{T-1}\sum_{i=1}^{m_t}(\mathbf{X}_{t,i}\mathbf{X}_{t,i}^\top-\mathbb{E}(\mathbf{X}_{t,i}\mathbf{X}_{t,i}^\top))\interleave_{s}\leq 2\mathbb{E}\interleave \sum_{t=0}^{T-1}\sum_{i=1}^{m_t}\varepsilon_{t,i}\mathbf{X}_{t,i}\mathbf{X}_{t,i}^\top\interleave_s,
\]
where $\left(\varepsilon_{t,i}\right)_{t,i}$ is a Rademacher sequence. By \Cref{lmm:rademacher_est}, 
\begin{align*}
   F:= \mathbb{E}(\delta_s)=&\mathbb{E}\interleave \sum_{t=0}^{T-1}\sum_{i=1}^{m_t}(\mathbf{X}_{t,i}\mathbf{X}_{t,i}^\top-\mathbb{E}(\mathbf{X}_{t,i}\mathbf{X}_{t,i}^\top))\interleave_{s}\\
\leq&2\mathbb{E}_{X}\mathbb{E}_{\varepsilon}\interleave \sum_{t=0}^{T-1}\sum_{i=1}^{m_t}(\varepsilon_{t,i}\mathbf{X}_{t,i}\mathbf{X}_{t,i}^\top))\interleave_{s}\\
\leq&56\sqrt{2}\sqrt{s}K_{\infty}\sqrt{\log(3M)\log(4k)}\log(4s)\mathbb{E}_{\mathbf{X}}\sqrt{ \interleave \sum_{t=0}^{T-1}\sum_{i=1}^{m_t}(\mathbf{X}_{t,i}\mathbf{X}_{t,i}^\top)\interleave_s}\\
\leq&56\sqrt{2}\sqrt{s}K_{\infty}\sqrt{\log(3M)\log(4k)}\log(4s)\mathbb{E}_{X}\sqrt{ \interleave \sum_{t=0}^{T-1}\sum_{i=1}^{m_t}(\mathbf{X}_{t,i}\mathbf{X}_{t,i}^\top)-\id\interleave_s+1}\\
\leq &56\sqrt{2}\sqrt{s}K_{\infty}\sqrt{\log(3M)\log(4k)}\log(4s)\sqrt{ \mathbb{E}_{\mathbf{X}}\interleave \sum_{t=0}^{T-1}\sum_{i=1}^{m_t}(\mathbf{X}_{t,i}\mathbf{X}_{t,i}^\top)-\id\interleave_s+1}\\
=&56\sqrt{2}\sqrt{s}K_{\infty}\sqrt{\log(3M)\log(4k)}\log(4s)\sqrt{F+1},
\end{align*}
where $M=\sum_{t=0}^{T-1}m_t$. 
Therefore, 
\[
F\leq D^2/2+D\sqrt{1+D^2/4}\leq D^2+D, \textnormal{ where }D=56\sqrt{2}\sqrt{s}K_{\infty}\sqrt{\log(3M)\log(4k)}\log(4s).
\]
If $D\leq\eta$ for some $\eta\in(0,1)$, we thus have $F\leq\eta+\eta^2$. Note that if we have  
\[
\|X_{t,i}\|_{\infty}=\frac{1}{\sqrt{m_t}}\left\|\frac{F_{k,T}\Lambda_{k}^tU_{k}^\top\delta_{t,\omega_{t,i}}}{\sqrt{\mathbf{p}_t(\omega_{t,i})}}\right\|_{\infty}=\frac{1}{\sqrt{m_t}}\nu_{k,t}\leq \frac{\eta}{56\sqrt{2}\sqrt{s}\sqrt{\log(3M)\log(4k)}\log(4s)},
\]
i.e., 
\begin{equation}\label{eqn:rest1_mt}
    m_t\geq \frac{6272(\nu_{k,t} )^2}{\eta^2}s\log(3nT)\log(4k)\log^2(4s), 
\end{equation} then 
 $D\leq\eta$ will hold. 

Next let's show that $\delta_s$ does not deviate much from its expectation with high probability. To this end, we use the deviation inequality of 
\begin{theorem}
    Let $\mathcal{F}$ be a countable set of functions $F:\mathbb{R}^n\rightarrow R$. Let $\mathbf{Y}_1,\cdots,\mathbf{Y}_M$ be independent random vectors in $\mathbb{R}^n$ such that $\mathbb{E}(F(\mathbf{Y}_{\ell}))=0$ and $F(\mathbf{Y}_{\ell})\leq K$ almost surely for all $\ell\in[M]$ and for all $F\in\mathcal{F}$ for some constant $K>0$. Introduce
    \[Z=\sup_{F\in\mathcal{F}}\sum_{\ell=1}^{M}F(\mathbf{Y}_{\ell}).\]
    Let $\sigma_{\ell}^2>0$ such that $\mathbb{E}(F(\mathbf{Y}_{\ell})^2)\leq\sigma_{\ell}^2$ for all $F\in\mathcal{F}$ and $\ell\in[M]$. Then for all $t>0$,
    \begin{equation}
       \label{eqn:deviation-ineqaulity}
       \mathbb{P}(Z\geq \mathbb{E}Z+t)\leq\exp\left( -\frac{t^2/2}{\sigma^2+2K\mathbb{E}(Z)+tK/3}\right)
    \end{equation}
    where $\sigma^2=\sum_{\ell=1}^M\sigma_{\ell}^2$.
\end{theorem}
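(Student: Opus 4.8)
This is Bousquet's form of Talagrand's concentration inequality for suprema of empirical processes (it is, e.g., Theorem~8.42 in \cite{Foucart2013-sv}, from which it could simply be quoted), and the plan is to prove it by the \emph{entropy method} (the Herbst argument). First I would make the standard reductions: by monotone convergence it suffices to treat a finite family $\mathcal{F}$, so that a measurable maximizer $\hat{F}\in\mathcal{F}$ with $Z=\sum_{\ell=1}^{M}\hat{F}(\mathbf{Y}_{\ell})$ exists; and by the scaling $F\mapsto F/K$, $t\mapsto t/K$ (which rescales $\sigma^{2}$ and $\mathbb{E}Z$ accordingly) one may assume $K=1$. For each $\ell$ introduce the leave-one-out variable $Z_{\ell}=\sup_{F\in\mathcal{F}}\sum_{j\neq\ell}F(\mathbf{Y}_{j})$, which is independent of $\mathbf{Y}_{\ell}$. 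Two structural facts are immediate from the maximizer: testing $Z_{\ell}$ against $\hat{F}$ gives $Z-Z_{\ell}\le \hat{F}(\mathbf{Y}_{\ell})\le 1$, and summing over $\ell$ yields the self-bounding inequality $\sum_{\ell=1}^{M}(Z-Z_{\ell})\le\sum_{\ell=1}^{M}\hat{F}(\mathbf{Y}_{\ell})=Z$.

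The goal is to bound the logarithmic moment generating function $\psi(\theta)=\log\mathbb{E}\exp\!\big(\theta(Z-\mathbb{E}Z)\big)$ for $\theta>0$. I would get this from the subadditivity (tensorization) of entropy, $\mathrm{Ent}\!\big(e^{\theta Z}\big)\le\sum_{\ell=1}^{M}\mathbb{E}\big[\mathrm{Ent}^{(\ell)}(e^{\theta Z})\big]$, where $\mathrm{Ent}^{(\ell)}$ is the entropy in $\mathbf{Y}_{\ell}$ conditionally on the remaining variables. Each conditional entropy is then bounded, via the variational characterization of entropy with $Z_{\ell}$ as reference value (the modified logarithmic Sobolev inequality), by a term of the shape $\mathbb{E}^{(\ell)}\big[e^{\theta Z}\,\phi(-\theta(Z-Z_{\ell}))\big]$ with $\phi(u)=e^{u}-u-1$. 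Using $Z-Z_{\ell}\le 1$ to control $e^{\theta(Z-Z_{\ell})}\le e^{\theta}$, the conditional second-moment estimates $\mathbb{E}^{(\ell)}[(Z-Z_{\ell})^{2}]$ traced back to $\mathbb{E}F(\mathbf{Y}_{\ell})=0$ and $\mathbb{E}F(\mathbf{Y}_{\ell})^{2}\le\sigma_{\ell}^{2}$, and the self-bounding bound $\sum_{\ell}(Z-Z_{\ell})\le Z$, one arrives at a Herbst-type differential inequality for $\psi$; integrating it gives
\[
\psi(\theta)\ \le\ \frac{v}{K^{2}}\big(e^{\theta K}-\theta K-1\big),\qquad v:=\sigma^{2}+2K\,\mathbb{E}Z .
\]

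Finally, a Chernoff bound gives $\mathbb{P}(Z\ge\mathbb{E}Z+t)\le\exp\!\big(\psi(\theta)-\theta t\big)$ for every $\theta>0$; optimizing over $\theta$ produces $\mathbb{P}(Z\ge\mathbb{E}Z+t)\le\exp\!\big(-\tfrac{v}{K^{2}}h(tK/v)\big)$ with $h(u)=(1+u)\log(1+u)-u$ (the Legendre dual of $\phi$), and the elementary inequality $h(u)\ge \tfrac{u^{2}/2}{1+u/3}$ turns this into exactly the claimed bound $\exp\!\big(-\tfrac{t^{2}/2}{\sigma^{2}+2K\mathbb{E}Z+tK/3}\big)$.

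The main obstacle is the per-coordinate entropy control in the middle step: passing cleanly from $\mathrm{Ent}^{(\ell)}(e^{\theta Z})$ to an expression in $Z-Z_{\ell}$, and then carrying out the self-bounding bookkeeping that converts $\sum_{\ell}\mathbb{E}\big[(Z-Z_{\ell})^{2}(\cdots)\big]$ into the clean proxy $\sigma^{2}+2K\mathbb{E}Z$. This is genuinely delicate because the increments $Z-Z_{\ell}$ are controlled only from above, by $K$, and not from below, so their negative part must be absorbed using the structure of the maximizer rather than a crude bound; everything else (the reductions, the tensorization, the ODE integration, and the Chernoff/$h(\cdot)$ endgame) is routine.
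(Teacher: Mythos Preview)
Your outline of the entropy-method proof is correct and is the standard route to Bousquet's inequality, but note that the paper does not prove this statement at all: it is quoted without proof as a black-box tool inside the proof of \Cref{thm:RIP}, exactly in the spirit of your opening remark that it ``could simply be quoted'' from \cite{Foucart2013-sv}. So there is no ``paper's own proof'' to compare against; your proposal goes well beyond what the paper actually does.
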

By definition of the norm $\interleave\cdot\interleave_s$, we can write
\begin{align*}
    \delta_s=&\interleave \sum_{t=0}^{T-1}\sum_{i=1}^{m_t}(\mathbf{X}_{t,i}\mathbf{X}_{t,i}^\top-\mathbb{E}(\mathbf{X}_{t,i}\mathbf{X}_{t,i}^\top))\interleave_{s}\\
    =&\sup_{\mathcal{S}\subseteq[k],|\mathcal{S}|=s}\|\sum_{t=0}^{T-1}\sum_{i=1}^{m_t}((\mathbf{X}_{t,i})_{\mathcal{S}}(\mathbf{X}_{t,i})_\mathcal{S}^\top-\mathbb{E}((\mathbf{X}_{t,i})_{\mathcal{S}}(\mathbf{X}_{t,i})_{\mathcal{S}}^\top))\|\\
    =&\sup_{(\mathbf{z},\mathbf{w})\in Q_{s,k}}\langle \sum_{t=0}^{T-1}\sum_{i=1}^{m_t}(\mathbf{X}_{t,i}\mathbf{X}_{t,i}^\top-\mathbb{E}(\mathbf{X}_{t,i}\mathbf{X}_{t,i}^\top))\mathbf{z},\mathbf{w}\rangle\\
    =&\sup_{(\mathbf{z},\mathbf{w})\in Q_{s,k}^*}\sum_{t=0}^{T-1}\sum_{i=1}^{m_t}\langle (\mathbf{X}_{t,i}\mathbf{X}_{t,i}^\top-\mathbb{E}(\mathbf{X}_{t,i}\mathbf{X}_{t,i}^\top))\mathbf{z},\mathbf{w}\rangle,
\end{align*}
where $(\mathbf{X}_{t,i})_{\mathcal{S}}$ denotes the vector $\mathbf{X}_{t,i}$ restricted to the entries in $\mathcal{S}$ and $Q_{s,k}=\bigcup_{\mathcal{S}\subseteq[k],|\mathcal{S}|\leq s}Q_{\mathcal{S},k}$ with
\[
Q_{\mathcal{S},k}=\{(\mathbf{z},\mathbf{w}):\mathbf{z},\mathbf{w}\in\mathbb{R}^k,\|\mathbf{z}\|=\|\mathbf{w}\|=1,\textnormal{supp}(\mathbf{z}),\textnormal{supp}(\mathbf{w})\subseteq \mathcal{S}\}.
\]
$Q_{s,k}^*$ denotes a densse countable subset of $Q_{s,k}$. Introducing $f_{\mathbf{z},\mathbf{w}}(\mathbf{X})=\langle (\mathbf{X}\mathbf{X}^\top-\mathbb{E}(\mathbf{X}\mathbf{X}^\top))\mathbf{z},\mathbf{w}\rangle$, we therefore have
\[
\delta_{s}=\sup_{(\mathbf{z},\mathbf{w})\in Q_{s,k}^*}\sum_{t=0}^{T-1}\sum_{i=1}^{m_t}f_{\mathbf{z},\mathbf{w}}(\mathbf{X}_{t,i}).
\]

To check the boundedness of $f_{\mathbf{z},\mathbf{w}}$ for $(\mathbf{z},\mathbf{w})\in Q_{\mathcal{S},k}$ with $|\mathcal{S}|=s$, we notice that
\begin{align*}
    |f_{\mathbf{z},\mathbf{w}}(\mathbf{X}_{t,i})|=&|\langle (\mathbf{X}_{t,i}\mathbf{X}_{t,i}^\top-\mathbb{E}(\mathbf{X}_{t,i}\mathbf{X}_{t,i}^\top))\mathbf{z},\mathbf{w}\rangle|\leq \|\mathbf{z}\|_2\|\mathbf{w}\|_2\|(\mathbf{X}_{t,i})_{\mathcal{S}}(\mathbf{X}_{t,i})_\mathcal{S}^\top-\mathbb{E}((\mathbf{X}_{t,i})_{\mathcal{S}}(\mathbf{X}_{t,i})_{\mathcal{S}}^\top)\|\\
    \leq&\|(\mathbf{X}_{t,i})_{\mathcal{S}}(\mathbf{X}_{t,i})_\mathcal{S}^\top-\mathbb{E}((\mathbf{X}_{t,i})_{\mathcal{S}}(\mathbf{X}_{t,i})_{\mathcal{S}}^\top)\|_{1}\\
    =&\max_{j\in \mathcal{S}}\sum_{p\in \mathcal{S}} \left| \frac{\lambda_j^t\lambda_p^t}{m_t\mathbf{p}_{t}( {\omega_{t,i}})f(\lambda_j)f(\lambda_p)}\mathbf{u}_j(i)\mathbf{u}_p(i)-\delta_{j,p}\frac{\lambda_j^t\lambda_p^t}{m_tf(\lambda_j)f(\lambda_p)}\right|\leq sK_{\infty}^{2},
\end{align*}
where $\delta_{j,p}=\begin{cases}
    1, j=p\\
    0, \textnormal{otherwise}
\end{cases}$.

For the variance term, we estimate
\begin{align*}
\mathbb{E}(f_{\mathbf{z},\mathbf{w}}(\mathbf{X}_{t,i}))^2\leq&\mathbb{E}\langle (\mathbf{X}_{t,i}\mathbf{X}_{t,i}^\top-\mathbb{E}(\mathbf{X}_{t,i}\mathbf{X}_{t,i}^\top))\mathbf{z},\mathbf{w}\rangle^2\\
=&\mathbb{E} w^\top (\mathbf{X}_{t,i}\mathbf{X}_{t,i}^\top-\mathbb{E}(\mathbf{X}_{t,i}\mathbf{X}_{t,i}^\top))\mathbf{z}\mathbf{z}^\top (\mathbf{X}_{t,i}\mathbf{X}_{t,i}^\top-\mathbb{E}(\mathbf{X}_{t,i}\mathbf{X}_{t,i}^\top))\mathbf{w}^\top\\
\leq&\|\mathbf{w}\|^2\mathbb{E}\|(\mathbf{X}_{t,i}\mathbf{X}_{t,i}^\top-\mathbb{E}(\mathbf{X}_{t,i}\mathbf{X}_{t,i}^\top))\mathbf{z}\mathbf{z}^\top (\mathbf{X}_{t,i}\mathbf{X}_{t,i}^\top-\mathbb{E}(\mathbf{X}_{t,i}\mathbf{X}_{t,i}^\top))\|\\
\leq &\mathbb{E}\|(\mathbf{X}_{t,i}\mathbf{X}_{t,i}^\top-\mathbb{E}(\mathbf{X}_{t,i}\mathbf{X}_{t,i}^\top))\mathbf{z}\|^2\\
=&\mathbf{z}^\top \mathbb{E}(\mathbf{X}_{t,i}\mathbf{X}_{t,i}^\top)^2\mathbf{z}-\mathbf{z}^\top(\mathbb{E}(\mathbf{X}_{t,i}\mathbf{X}_{t,i}^\top))^2\mathbf{z}\\
=&\mathbb{E}(\|(\mathbf{X}_{t,i})_{\mathcal{S}}\|^2\langle \mathbf{X}_{t,i},\mathbf{z}\rangle^2)-\mathbf{z}^\top(\mathbb{E}(\mathbf{X}_{t,i}\mathbf{X}_{t,i}^\top))^2\mathbf{z}\\
\leq& sK_{\infty}^2\mathbf{z}^\top\mathbb{E}(\mathbf{X}_{t,i}\mathbf{X}_{t,i}^\top)\mathbf{z}-\mathbf{z}^\top(\mathbb{E}(\mathbf{X}_{t,i}\mathbf{X}_{t,i}^\top)^2\mathbf{z}\\
\leq& \left\| \frac{sK_{\infty}^2}{m_t}\mathbf{\Lambda}_{k}^{2t}\mathbf{F}_{k,T}^2 -\frac{1}{m_t^2}\mathbf{\Lambda}_{k}^{4t}\mathbf{F}_{k,T}^4\right\|.
\end{align*}
Then $\sum_{t=0}^{T-1}\sum_{i=1}^{m_t}\mathbb{E}(f_{\mathbf{z},\mathbf{w}}(\mathbf{X}_{t,i}))^2=\sum_{t=0}^{T-1}\sum_{i=1}^{m_t}\left\| \frac{sK_{\infty}^2}{m_t}\mathbf{\Lambda}_{k}^{2t}\mathbf{F}_{k,T}^2 -\frac{1}{m_t^2}\mathbf{\Lambda}_{k}^{4t}\mathbf{F}_{k,T}^4\right\|\leq sK_{\infty}^2$. 
Under the condition that $D<\eta$, we have
\begin{align*}
    \mathbb{P}(\delta_s\geq\eta+\eta^2+\beta)\leq&\mathbb{P}(\delta_s\geq\mathbb{E}(\delta_s)+\beta)\\
    \leq& \exp\left(-\frac{\beta^2/2}{sK_{\infty}^2(1+2(\eta+\eta^2)+\beta/3) }\right)\\
    =&\exp\left(-\frac{\beta^2}{2sK_{\infty}^2}\frac{1}{(1+2(\eta+\eta^2)+\beta/3) }\right)\leq \exp\left(-\frac{3\beta^2}{32sK_{\infty}^2}\right).
\end{align*}
To ensure $\exp\left(-\frac{3\beta^2}{32sK_{\infty}^2}\right)<\epsilon$, we can set 
\[
\frac{m_t}{(\nu_{k,t} )^2}\geq\frac{32s}{3\beta^2}\log(\epsilon^{-1})
\]
i.e.,
\[
m_t\geq \frac{32s(\nu_{k,t} )^2}{3\beta^2}\log(\epsilon^{-1}).
\]
Taking \eqref{eqn:rest1_mt} into account, we proved that $\delta_s\leq \eta+\eta^2+\beta$ with probability at least $1-\epsilon$ provided that $m_t$ satisfies the two conditions:
\begin{align}
    m_t\geq& \frac{32s(\nu_{k,t} )^2}{3\beta^2}\log(\epsilon^{-1}),\\
      m_t\geq& \frac{6272s(\nu_{k,t} )^2}{\eta^2}\log(3nT)\log(4k)\log^2(4s).
\end{align}
\end{proof}

\end{appendices}
\bibliographystyle{plain}
\bibliography{refs}

\end{document}